\documentclass[12pt, reqno]{amsart}
\usepackage{amsmath, amsthm, amscd, amsfonts, amssymb, graphicx, color}
\usepackage[bookmarksnumbered, colorlinks, plainpages]{hyperref}
\usepackage{color}
\usepackage{mathptmx}	
\textheight 22.5truecm \textwidth 14.5truecm
\setlength{\oddsidemargin}{0.35in}\setlength{\evensidemargin}{0.35in}

\setlength{\topmargin}{-.5cm}

\newtheorem{theorem}{Theorem}[section]

\newtheorem{proposition}{Proposition}[section]
\newtheorem{corollary}{Corollary}[section]
\theoremstyle{definition}
\newtheorem{definition}{Definition}[section]
\newtheorem{example}{Example}[section]

\theoremstyle{remark}
\newtheorem{remark}{Remark}[section]
\numberwithin{equation}{section}

\begin{document}

\title[\bf On $(n,m)$-$A$-normal and  $(n,m)$-$A$-quasinormal Semi-
Hilbert space operators
]{\bf On $(n,m)$-$A$-normal and  $(n,m)$-$A$-quasinormal  Semi-
Hilbert space operators}

\author[Samir Al Mohammady; \; Sid Ahmed Ould Beinane and Sid Ahmed ould  Ahmed Mahmoud   ]{ Samir Al Mohammady,\; Sid Ahmed Ould Beinane and Sid Ahmed o.Ahmed Mahmoud }

\address{ Samir Almohammady  \endgraf
  Mathematics Department, College of Science, Jouf
University,\endgraf Sakaka P.O.Box 2014. Saudi Arabia}
\email{senssar@ju.edu.sa }

\address{ Sid Ahmed Ould Beinane  \endgraf
  Mathematics Department, College of Science, Jouf
University,\endgraf Sakaka P.O.Box 2014. Saudi Arabia}
\email{Beinane06@gmail.com}

 \address{ Sid Ahmed Ould Ahmed Mahmoud  \endgraf
  Mathematics Department, College of Science, Jouf
University,\endgraf Sakaka P.O.Box 2014. Saudi Arabia}
\email{sidahmed@ju.edu.sa,\;sidahmed.sidha@gmail.com \endgraf
{\bf Lab: Mathematical Analysis and Applications}}

%\date{.....}
\maketitle
\begin{abstract}

The purpose of the paper is to  introduce  and study a new class
of   operators on semi-Hilbertian spaces i.e.; spaces generated by positive semidefinite
sesquilinear forms. Let   ${\mathcal H}$  be a Hilbert space and let $A$ be a positive bounded operator on ${\mathcal H}$. The semi-inner product   $\left\langle h\;|\;k\right\rangle_A:=\left\langle Ah\;|\;k\right\rangle$\; $h,k \in {\mathcal H}$  induces a semi-norm $\|.\|_A$.  This makes   ${\mathcal H}$  into a semi-Hilbertian space. An operator $T\in {\mathcal B}_A({\mathcal H})$ is said to be $(n,m)$-$A$-normal if $\big[T^n,  \big(T^{\sharp_A}\big)^m\big]:=T^n\big(T^{\sharp_A}\big)^m-\big(T^{\sharp_A}\big)^mT^n=0$ for some positive integers $n$ and $m$.
\end{abstract}
{ \bf Key words.}  Semi-Hilbertian space, $A$-normal operator, $(n,m)$-normal operator, $(n,m)$-quasinormal operator.
\par\vskip 0.2 cm \noindent{\bf {AMS subject classification codes.
54E40, \;\;47B99.}}
\

\

\begin{center}
{\section{{\bf Introduction and preliminaries}}}
\end{center}

\

\
Throughout this paper, let
   $\big({\mathcal H},\;\big\langle .\;|\;.\big\rangle\big)$
be a complex Hilbert space equipped with the norm $\| .\|.$
 Let ${\mathcal B}({\mathcal H})$ denote the $C^*$-algebra of all bounded linear operators on ${\mathcal H}$
and let ${\mathcal B}({\mathcal H})^+$
 be the cone of positive operators of ${\mathcal B}({\mathcal H})$ defined as
 $${\mathcal B}({\mathcal H})^+:=\{ A\in {\mathcal B}({\mathcal H})\;\;/\; \left\langle A h\;|\;h\right\rangle \geq 0\;;\forall\; h\in {\mathcal H}\;\}.$$
\noindent For every  $T \in {\mathcal B}({\mathcal H})$  its range is denoted by ${\mathcal R}(T),$ its null space by ${\mathcal N} (T)$, and its
adjoint by $T^*$.
If ${\mathcal M}$
is a linear subspace of ${\mathcal H}$, then $\overline{{\mathcal M}}$ stands for its closure in the norm topology of
${\mathcal H}$. We denote the orthogonal projection onto a closed linear subspace ${\mathcal M}$ of ${\mathcal H}$
by $P_{_{{\mathcal M}}}.$  The positive operator $A \in  {\mathcal B}({\mathcal H})$ define a positive semi-definite sesquilinear form
$\left\langle .\;|\; .\right\rangle_A : {\mathcal H} \times {\mathcal H} \longrightarrow  \mathbb{C}$  given by
 $\left\langle h\;|\; k\right\rangle_A = \left\langle Ah\;|\; k\right\rangle$. Note that $\left\langle.\;|\; .\right\rangle_A$  defines a semi-inner
product on ${\mathcal H}$, and the semi-norm induced by it is given by $\|h\|_A=\sqrt{\left\langle h\;|\;h\right\rangle_A}$
for every $h \in {\mathcal{H}}.$  Observe that  $\|h\|_A = 0 $ if and only if  $h\in  {\mathcal N} (A)$. Then  $\|.\|_A$
is
a norm if and only if $A $ is injective, and the semi-normed space $\big( {\mathcal H}, \|.\|_A\big)$
 is a
complete space if and only if ${\mathcal R}(A)  $ is closed.\par \vskip 0.2 cm \noindent
The above semi-norm induces a semi-norm on the subspace ${\mathcal{B}}^A({\mathcal{H}})$ of ${\mathcal{B}}({\mathcal{H}}) $ consisting
of all  $T \in {\mathcal{B}}({\mathcal{H}})$ so that for some $c > 0$ and for all $ h \in {\mathcal{H}}$ , $\|Th\|_A\leq  c\|h\|_A.$
Indeed, if $T \in  {\mathcal{B}}^A({\mathcal{H}})$, then
$$\|T\|_A := \sup\big\{\frac{\|Th\|_A}{\|h\|_A},\;\;h\notin {\mathcal N}(A) \;\big\}.$$
\noindent For $ T \in {\mathcal{B}}({\mathcal{H}})$, an
operator $S \in {\mathcal{B}}({\mathcal{H}})$ is called an $A$-adjoint operator of $T$ if for every $h,k \in {\mathcal  H}$, we
have  $\left\langle T h\;|\;k\right\rangle_A = \left\langle h\;|\; Sk \right\rangle_A$, that is, $AS = T^*A$. If $T$ is an $A$-adjoint of itself, then $T$ is called an $A$-selfadjoint operator.

Generally, the existence of an $A$-adjoint operator is not guaranteed. The set of all operators that admit A-adjoints
is denoted by ${\mathcal B}_A({\mathcal H})$. Note that ${\mathcal B}_A({\mathcal H})$ is a subalgebra of ${\mathcal B}({\mathcal H})$, which is neither
closed nor dense in ${\mathcal B}({\mathcal H})$. Moreover, the inclusions ${\mathcal B}_A({\mathcal H})\subseteq {\mathcal B}^A({\mathcal H})\subseteq {\mathcal B}({\mathcal H})$
hold with equality if $A$ is one-to-one and has a closed range. If  $T \in  {\mathcal B}_A({\mathcal H})$, the reduced solution of the equation
$AX = T^*A $ is a distinguished $A$-adjoint operator of $T$, which is denoted by $ T^{\sharp_A}$. Note
that $T^{\sharp_A} = A^\dag T^*A$ in which  $A^\dag $ is the Moore-Penrose inverse of $A$. It was observed that the $A$-adjoint operator $T^{\sharp_A}$
 verifies $$AT^{\sharp_A}=T^*A,\;\; {\mathcal R}(T^{\sharp_A})\subseteq \overline{{\mathcal R}(A)}\;\;\text{and}\;\;
 {\mathcal N}( T^{\sharp_A}) ={\mathcal N}(T^*A).$$   For $T, S \in {\mathcal B}_A({\mathcal H})$, it is easy to see that
 $\|T S\|_A \leq  \|T\|_A
\|S\|_A$ and $(T S)^{\sharp_A}=S^{\sharp_A}T^{\sharp_A}$.  \vskip 0.2 cm \noindent
  Notice that if $T \in  {\mathcal B}_A({\mathcal H})$, then $T^{\sharp_A} \in {\mathcal B}_A({\mathcal H})$.
$\big(T^{\sharp_A}\big)^{\sharp_A}=P_{\overline{{\mathcal R}(A)}}TP_{\overline{{\mathcal R}(A)}}$
and $\big( \big(T^{\sharp_A}\big)^{\sharp_A} \big)^{\sharp_A}=T^{\sharp_A}.$
(For more detail on the concepts cited above  see {\cite{ACG1, ACG2,ACG3}).\par \vskip 0.2 cm \noindent
\par \vskip 0.2 cm \noindent
For any arbitrary operator $T\in {\mathcal B}_A({\mathcal H})$, we can write
$$Re_A(T):=\frac{1}{2}\big(T+T^{\sharp_A}\big)\;\;\text{and}\;\;Im_A(T):=\frac{1}{2i}\big(T-T^{\sharp_A}\big).$$
The concept of $n$-normal operators as a
generalization of normal operators on Hilbert spaces has introduced and studied  by  A. S. Jibril  \cite{AJ}  and  S. A. Alzuraiqi et al.  \cite{SAP}.
 The class of $n$-power normal operators is denoted by $[n{\bf { N}}]$.  An operator $T$ is called  $n$-power normal if
 $[T^n, T^*]=0$ \big( equivalently $T^nT^*=T^*T^n$\big).
Very recently, several papers have appeared on $n$-normal operators. We refer the interested reader to \cite{MN,CLU,SMV} for complete
details.\par \vskip 0.2 cm \noindent
In \cite{EA1} and \cite{EA2}, the authors  has introduced and studied the classes of $(n,m)$-normal powers  and $(n,m)$-power quasi-normal operators as follows: An operator $T \in{\mathcal B}({\mathcal H})$ is said to be $(n,m)$-powers normal(or $(n,m)$-power normal) if $T^n\big(T^{m}\big)^*=\big(T^{m}\big)^*T^n$ and it said to be $(n,m)$-power quasi-normal if $T^n\big(T^{*}\big)^mT= \big( T^{*}\big)^mTT^n $ where $n, m$ be two nonnegative integers. We refer the interested reader to \cite{CLU} for complete details on $(n,m)$-power normal operators. \par \vskip 0.2 cm \noindent
The classes of normal, $(\alpha, \beta)$-normal, $n$-power quasi normal, isometries, partial isometries, unitary operators
etc., on Hilbert spaces have been generalized to semi-Hilbertian spaces by many authors
in many papers. (See for more detail \cite{ACG1,ACG2, ACG3, ABS,CC, JSH, AS,OBA,LS}).
An operator $T\in {\mathcal B}_A({\mathcal H})$ is said to be \par \vskip 0.2 cm \noindent
$(1)$ $A$-normal if $T^{\sharp_A}T= TT^{\sharp_A}$ \quad \cite{AS} \par \vskip 0.2 cm \noindent
$(2)$ $(\alpha, \beta)$-$A$-normal, if  $\beta^2 T^{\sharp_A}T\geq_A TT^{\sharp_A}\geq_A \alpha^2 T^{\sharp_A}T$,\;\text{for}\;$0\leq \alpha \leq 1\leq \beta$\quad \cite{ABS}\par \vskip 0.2 cm \noindent $(3)$ $(A,n)$-power-quasinormal if  $T^n(T^{\sharp_A}T)=(TT^{\sharp_A})T^n,$ \quad \cite{JSH}\par \vskip 0.2 cm \noindent $(4)$
$A$-isometry if $T^{\sharp_A}T=P_{\overline{{\mathcal R}(A)}}$ \quad \cite{ACG1}. \par \vskip 0.2 cm \noindent $(5)$  $A$-unitary if
$T^{\sharp_A}T=\big(T^{\sharp_A}\big)^{\sharp_A}T^{\sharp_A}=P_{\overline{{\mathcal R}(A)}},$ i.e.; $T$ and $T^{\sharp_A}$ are $A$-isometries \quad \cite{ACG1}.\par \vskip 0.2 cm \noindent
From now on,
$A$
denotes a positive operator on
${\mathcal H}$ i.e.;
$\big( A \in {\mathcal B}({\mathcal H})^+\big).$\par \vskip 0.2 cm \noindent
This paper is devoted to the study of some new classes of operators on Semi-Hilbert spaces
called $(n,m)$-$A$-normal operators
and $(n,m)$-$A$-quasinormal operators.
 Some properties of these classes are investigated.
\

\

\section{{\bf $(n,m)$-$A$-normal operators}}

In this section, the class of $(n,m)$-$A$-normal operators as a generalization of the
classes of $A$-normal  is introduced. In addition, we study several
properties for members from this class of operators.
\

\

\begin{definition}\label{def2.1}

Let $T\in {\mathcal B}_A({\mathcal H})$. We said that $T$ is ${(n,m)}$-$A$-normal if
\begin{equation}
\big[T^n,\big(T^{\sharp_A}\big)^m\big]=T^n \big(T^{\sharp_A}\big)^m-\big(T^{\sharp_A}\big)^mT^n=0
\end{equation}
 for some positive
integers $n$ and $m$. This class of operators will denoted by $[(n,m)
{\bf { N}}]_A
$.
\end{definition}
\begin{remark} We make the following observations:\par \vskip 0.2 cm \noindent
$(1)$ Every $A$-normal operator is an $(n,m)$-$A$-normal for all  $n ,m\in \mathbb{N}$.\par \vskip 0.2 cm \noindent $(2)$ If $n=m=1$, every $(1,1)$-$A$-normal operator is $A$-normal operator.\par \vskip 0.2 cm \noindent $(3)$
If $T\in [(1,m){\bf N}]_A,$ then $T\in [(n,m){\bf N}]_A$ and  if $T\in [(n,1){\bf N}]_A $ then $T\in [(n,m){\bf N}]_A.$\par \vskip 0.2  cm \noindent $(4)$ If $T\in [(n,m){\bf N}]_A$, then $T\in [(2n,m){\bf N}]_A\wedge [(n,2m){\bf N}]_A\wedge [(2n,2m){\bf N}]_A$.

\end{remark}
\begin{remark} In the following example we give an operator that is $(n,m)$-$A$-normal for some positive integers $n$ and $m$ but  is not a $A$-normal operator.\end{remark}
\begin{example}
Let $T=\left(
         \begin{array}{cc}
           2 & 0 \\
           1 & -2\\
         \end{array}
       \right)
$ and $A=\left(
         \begin{array}{cc}
           1 & 0 \\
           0 & 2\\
         \end{array}
       \right)$ be operators
acting on two dimensional Hilbert space $\mathbb{C}^2$.
        A simple calculation shows that $T^{\sharp_A}=\left(
         \begin{array}{cc}
           2 & 2 \\
           0 & -2\\
         \end{array}
       \right)$. Moreover  $T^{\sharp_A}T\not=TT^{\sharp_A}$ and $T^{\sharp_A}T^2=T^2T^{\sharp_A}$.
       Therefore $T$ is a $(2,1)$-$A$-normal but not a $A$-normal operator.

\end{example}
In \cite[Theorem 2.1]{AS} it was observed that if $T\in {\mathcal B}_A({\mathcal H})$ then $T$  is $A$-normal if and only if $$\|Th\|_A=\|T^{\sharp_A}h\|_A,\;\;\forall\;h\in {\mathcal H}\;\;\text{and}\;\;{\mathcal R}(TT^{\sharp_A})\subseteq \overline{{\mathcal R}(A)}.$$ In the following theorem, we generalize this characterization to $(n,m)$-$A$-normal operators.

\begin{theorem}\label{thm2.1}
Let $T\in {\mathcal B}_A({\mathcal H})$. Then
  $T$ is $(n,m)$-$A$-normal operator for some positive integers $n$ and $m$ if and only if $T$ satisfying the following conditions: \par \vskip 0.2 cm \noindent $(1)$ $\left\langle\big(T^{\sharp_A}\big)^mh\;|\;\big(T^{\sharp_A}\big)^nh\right\rangle_A=\left\langle \big(T^nh\;|\;T^mh\right\rangle_A, \;\;\;\forall\;h\in {\mathcal H}$.
  \par \vskip 0.2 cm \noindent $(2)$
 ${\mathcal R}\big(T^n\big(T^{\sharp_A}\big)^m\big)\subseteq \overline{{\mathcal R}(A)}.$
\end{theorem}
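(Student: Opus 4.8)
The plan is to set $B:=T^n\big(T^{\sharp_A}\big)^m$ and $C:=\big(T^{\sharp_A}\big)^mT^n$, so that being $(n,m)$-$A$-normal is exactly the operator identity $B=C$, and then to reduce $B=C$ to conditions $(1)$ and $(2)$ by exploiting the degeneracy of the semi-inner product. The algebraic facts I would use throughout are $\big(T^n\big)^{\sharp_A}=\big(T^{\sharp_A}\big)^n$ (from $(TS)^{\sharp_A}=S^{\sharp_A}T^{\sharp_A}$), the iterated defining relation $A\big(T^{\sharp_A}\big)^m=\big(T^m\big)^*A$ (obtained by applying $AT^{\sharp_A}=T^*A$ repeatedly), and the orthogonality $\mathcal{N}(A)=\mathcal{R}(A)^\perp$, equivalently $\overline{\mathcal{R}(A)}\cap\mathcal{N}(A)=\{0\}$, which holds because $A$ is positive and hence self-adjoint.

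First I would record two diagonal computations. Moving $T^n$ across the form and using $\big(T^n\big)^{\sharp_A}=\big(T^{\sharp_A}\big)^n$ gives $\langle Bh\mid h\rangle_A=\langle \big(T^{\sharp_A}\big)^m h\mid \big(T^{\sharp_A}\big)^n h\rangle_A$, the left-hand side of $(1)$. Writing $\langle Ch\mid h\rangle_A=\langle A\big(T^{\sharp_A}\big)^mT^n h\mid h\rangle$ and substituting $A\big(T^{\sharp_A}\big)^m=\big(T^m\big)^*A$ yields $\langle Ch\mid h\rangle_A=\langle AT^n h\mid T^m h\rangle=\langle T^n h\mid T^m h\rangle_A$, the right-hand side of $(1)$. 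Thus condition $(1)$ says precisely that $\langle Bh\mid h\rangle_A=\langle Ch\mid h\rangle_A$ for every $h\in\mathcal{H}$. Applying the complex polarization identity to the sesquilinear form $\langle\cdot\mid\cdot\rangle_A$, this diagonal equality upgrades to $\langle (B-C)h\mid k\rangle_A=0$ for all $h,k$, i.e. $\langle A(B-C)h\mid k\rangle=0$, which is equivalent to $A(B-C)=0$. Hence condition $(1)$ is equivalent to $AB=AC$, that is, $\mathcal{R}(B-C)\subseteq\mathcal{N}(A)$.

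The two implications then follow quickly. If $T$ is $(n,m)$-$A$-normal, then $B=C$, so $(1)$ holds, and $\mathcal{R}\big(T^n\big(T^{\sharp_A}\big)^m\big)=\mathcal{R}(B)=\mathcal{R}(C)\subseteq\mathcal{R}\big(\big(T^{\sharp_A}\big)^m\big)\subseteq\mathcal{R}\big(T^{\sharp_A}\big)\subseteq\overline{\mathcal{R}(A)}$, giving $(2)$. Conversely, assume $(1)$ and $(2)$. By the previous paragraph $(1)$ forces $\mathcal{R}(B-C)\subseteq\mathcal{N}(A)$, while $(2)$ gives $\mathcal{R}(B)\subseteq\overline{\mathcal{R}(A)}$ and the inclusion $\mathcal{R}(C)\subseteq\mathcal{R}\big(\big(T^{\sharp_A}\big)^m\big)\subseteq\overline{\mathcal{R}(A)}$ holds automatically; hence $\mathcal{R}(B-C)\subseteq\overline{\mathcal{R}(A)}$. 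Intersecting the two range inclusions and using $\overline{\mathcal{R}(A)}\cap\mathcal{N}(A)=\{0\}$ forces $B-C=0$, i.e. $T$ is $(n,m)$-$A$-normal.

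The hard part is conceptual rather than computational: since $\|\cdot\|_A$ is only a semi-norm, the diagonal-to-operator passage can never yield $B=C$ on its own, only $AB=AC$. Condition $(2)$ is exactly the range hypothesis that repairs this degeneracy, and the key observation making the characterization sharp is that $\mathcal{R}(C)\subseteq\overline{\mathcal{R}(A)}$ is free (because ranges of $A$-adjoints lie in $\overline{\mathcal{R}(A)}$), so only the range of $B=T^n\big(T^{\sharp_A}\big)^m$ needs to be assumed controlled. I would take care to verify the substitution $A\big(T^{\sharp_A}\big)^m=\big(T^m\big)^*A$ by induction and to state the polarization step explicitly, since these two points carry the whole argument.
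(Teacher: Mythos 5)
Your proof is correct and follows essentially the same route as the paper: both translate condition $(1)$ into the diagonal identity $\langle A(B-C)h\mid h\rangle=0$, upgrade it by polarization to $\mathcal{R}(B-C)\subseteq\mathcal{N}(A)$, and then combine with condition $(2)$ and the automatic inclusion $\mathcal{R}\big((T^{\sharp_A})^mT^n\big)\subseteq\overline{\mathcal{R}(A)}$ to conclude via $\overline{\mathcal{R}(A)}\cap\mathcal{N}(A)=\{0\}$. If anything, you are more explicit than the paper about the polarization step and about why the range of $(T^{\sharp_A})^mT^n$ needs no hypothesis, which is a welcome clarification rather than a deviation.
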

\begin{proof} Assume that $T$ is a $(n,m)$-$A$-normal operator and we need to proof that $T$ satisfying the conditions $(1)$ and $(2)$.
In fact, we have
\begin{eqnarray*}
\left\langle \big[ T^n, \big(T^{\sharp_A}\big)^m\big]h\;|\;h\right\rangle_A=0&\Rightarrow&
\left\langle T^n\big(T^{\sharp_A}\big)^mh\;|\;h\right\rangle_A-\left\langle \big(T^{\sharp_A}\big)^mT^nh\;|\;h\right\rangle_A=0\\&\Rightarrow&
\left\langle \big(T^{\sharp_A}\big)^mh\;|\;T^{*n}Ah\right\rangle-\left\langle A\big(T^{\sharp_A}\big)^mT^nh\;|\;h\right\rangle=0\\&\Rightarrow&
\left\langle \big(T^{\sharp_A}\big)^mh\;|\;\big(T^{\sharp_A}\big)^nh\right\rangle_A-\left\langle \big(T^nh\;|\;T^mh\right\rangle_A=0\\&\Rightarrow&\left\langle \big(T^{\sharp_A}\big)^mh\;|\;\big(T^{\sharp_A}\big)^nh\right\rangle_A=\left\langle \big(T^nh\;|\;T^mh\right\rangle_A.
\end{eqnarray*}
Moreover the condition $\big[ T^n, \big(T^{\sharp_A}\big)^m\big]=0$ implies  that $T^n\big(T^{\sharp_A}\big)^m=\big(T^{\sharp_A}\big)^mT^n$. \par \vskip 0.2 cm \noindent Therefore
$$ {\mathcal R}\big(T^n\big(T^{\sharp_A}\big)^m\big)={\mathcal R}\big(\big(T^{\sharp_A}\big)^mT^n\big) \subseteq {\mathcal R}\big(T^{\sharp_A}\big)\subseteq \overline{{\mathcal R}(A)}.$$
Conversely, assume that $T$ satisfying conditions $(1)$ and $(2)$ and we prove that $T$ is $(n,m)$-$A$-normal operator.
From the condition $(1)$, a simple computation shows that
\begin{eqnarray*}
\left\langle\big(T^{\sharp_A}\big)^mh\;|\;\big(T^{\sharp_A}\big)^nh\right\rangle_A-\left\langle \big(T^nh\;|\;T^mh\right\rangle_A=0&\Rightarrow&
\left\langle T^n\big(T^{\sharp_A}\big)^mh\;|\;h\right\rangle_A-\left\langle \big(T^{\sharp_A}\big)^mT^nh\;|\;h\right\rangle_A=0
\\&\Rightarrow& \left\langle \big[ T^n, \big(T^{\sharp_A}\big)^m\big]h\;|\;h\right\rangle_A=0,
\end{eqnarray*}
which implies that ${\mathcal R}\big( \big[ T^n, \big(T^{\sharp_A}\big)^m\big]\big)\subseteq {\mathcal N}(A).$
\par \vskip 0.2 cm \noindent On the other hand, if the condition $(2)$  holds, it follows that
$${\mathcal R}\big( \big[ T^n, \big(T^{\sharp_A}\big)^m\big]\big)\subseteq \overline{{\mathcal R}(A)}={\mathcal N}(A)^\bot. $$
We deduce that $\big[ T^n, \big(T^{\sharp_A}\big)^m\big]=0$  which means that the operator $T$ is $(n,m)$-$A$-normal operator.
\end{proof}

\begin{remark}
If $n=m=1$, then Theorem  \ref{thm2.1} coincides with Theorem 2.1 in \cite{AS}.
\end{remark}
The following proposition discuss the relation between $(n,m)$-$A$-normal operators and
$(m,n)$-$A$-normal operators.
\begin{proposition}\label{pro2.1}
Let $T\in {\mathcal B}_A({\mathcal H})$ such that ${\mathcal N}(A)^\bot$ is a invariant subspace of $T$. Then following statements are equivalent
\par \vskip 0.2 cm \noindent $(1)$ $T$ is $(n,m)$-$A$-normal operator.
\par \vskip 0.2 cm \noindent $(2)$ $T$ is $(m,n)$-$A$-normal operator.
\end{proposition}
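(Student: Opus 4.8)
The plan is to deduce the equivalence directly from the characterisation established in Theorem~\ref{thm2.1}, applied once to the pair $(n,m)$ and once to the pair $(m,n)$, and then to check that under the stated invariance the two resulting sets of conditions coincide. Throughout I would write $P:=P_{\overline{{\mathcal R}(A)}}$ and use that, $A$ being positive and hence self-adjoint, $\overline{{\mathcal R}(A)}={\mathcal N}(A)^\bot$; with this the hypothesis reads $T\big(\overline{{\mathcal R}(A)}\big)\subseteq \overline{{\mathcal R}(A)}$, and by iteration $T^k\big(\overline{{\mathcal R}(A)}\big)\subseteq \overline{{\mathcal R}(A)}$ for every $k\geq 1$.

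By Theorem~\ref{thm2.1}, $T\in[(n,m){\bf N}]_A$ is equivalent to the conjunction of the \emph{angle condition} $\langle (T^{\sharp_A})^m h\,|\,(T^{\sharp_A})^n h\rangle_A=\langle T^n h\,|\,T^m h\rangle_A$ for all $h$ (call it $(1)$) and the \emph{range condition} ${\mathcal R}\big(T^n (T^{\sharp_A})^m\big)\subseteq \overline{{\mathcal R}(A)}$ (call it $(2)$); similarly $T\in[(m,n){\bf N}]_A$ is equivalent to the same two statements with $n$ and $m$ interchanged, say $(1')$ and $(2')$. The first key observation is that $(1)$ and $(1')$ are equivalent with no hypothesis on $T$: since $\langle\cdot\,|\,\cdot\rangle_A$ is a conjugate-symmetric semi-inner product, conjugating the identity in $(1)$ and using $\overline{\langle u\,|\,v\rangle_A}=\langle v\,|\,u\rangle_A$ turns it into exactly $(1')$.

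The second key observation is that the invariance hypothesis forces both range conditions $(2)$ and $(2')$ to hold automatically. Indeed ${\mathcal R}\big((T^{\sharp_A})^m\big)\subseteq {\mathcal R}(T^{\sharp_A})\subseteq \overline{{\mathcal R}(A)}$, and applying $T^n$, which maps $\overline{{\mathcal R}(A)}$ into itself, yields ${\mathcal R}\big(T^n (T^{\sharp_A})^m\big)\subseteq \overline{{\mathcal R}(A)}$; the same argument with $n,m$ swapped gives $(2')$. Combining the two observations, Theorem~\ref{thm2.1} gives $T\in[(n,m){\bf N}]_A\iff(1)\wedge(2)\iff(1')\wedge(2')\iff T\in[(m,n){\bf N}]_A$, which is the desired equivalence.

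The only point at which the invariance of ${\mathcal N}(A)^\bot$ is genuinely used is in securing the range conditions $(2)$ and $(2')$ simultaneously — the angle conditions match for free by conjugate symmetry — so this is the step I would treat most carefully. As an alternative route avoiding Theorem~\ref{thm2.1}, one can set $C:=\big[T^n,(T^{\sharp_A})^m\big]$ and $D:=\big[T^m,(T^{\sharp_A})^n\big]$ and show $AC=0\iff AD=0$ purely algebraically from the intertwining relations $A(T^{\sharp_A})^k=(T^*)^kA$ and $(T^{\sharp_A})^{*}A=AT$ (take the Hilbert-space adjoint of $AC=0$ and simplify, the computation being symmetric in $n$ and $m$); invariance then upgrades $AD=0$ to $D=0$, because it guarantees ${\mathcal R}(D)\subseteq\overline{{\mathcal R}(A)}={\mathcal N}(A)^\bot$ while $AD=0$ means ${\mathcal R}(D)\subseteq{\mathcal N}(A)$, and the two together force $D=0$.
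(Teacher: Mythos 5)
Your argument is correct, and it takes a different route from the paper's. The paper proves $(1)\Rightarrow(2)$ by applying $\sharp_A$ directly to the commutator identity, using $(ST)^{\sharp_A}=T^{\sharp_A}S^{\sharp_A}$ and $\big(T^{\sharp_A}\big)^{\sharp_A}=P_{\overline{{\mathcal R}(A)}}TP_{\overline{{\mathcal R}(A)}}$ to land on an operator whose range lies in ${\mathcal N}(A)$, and then invokes the invariance hypothesis to place that range also in $\overline{{\mathcal R}(A)}={\mathcal N}(A)^{\bot}$, forcing it to vanish. You instead route everything through Theorem~\ref{thm2.1}: the angle conditions for $(n,m)$ and $(m,n)$ are interchanged by complex conjugation of the semi-inner product (which is conjugate-symmetric since $A=A^{*}\geq 0$), and the invariance hypothesis makes both range conditions hold unconditionally, via ${\mathcal R}\big(\big(T^{\sharp_A}\big)^{m}\big)\subseteq\overline{{\mathcal R}(A)}$ and $T^{k}\big(\overline{{\mathcal R}(A)}\big)\subseteq\overline{{\mathcal R}(A)}$. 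The underlying mechanism is the same in both proofs --- the $(m,n)$ commutator is the ``$A$-adjoint'' of the $(n,m)$ commutator, and invariance upgrades vanishing modulo ${\mathcal N}(A)$ to genuine vanishing --- but your packaging buys two things: it makes transparent that under the hypothesis the range condition of Theorem~\ref{thm2.1} is automatic, so $(n,m)$-$A$-normality reduces to the manifestly $(n,m)\leftrightarrow(m,n)$-symmetric angle condition, and it sidesteps the somewhat delicate collapse of $\big(P_{\overline{{\mathcal R}(A)}}TP_{\overline{{\mathcal R}(A)}}\big)^{m}$ that the paper performs (and which, as written there, needs the extra observations $P_{\overline{{\mathcal R}(A)}}\big(T^{n}\big)^{\sharp_A}=\big(T^{n}\big)^{\sharp_A}$ and $T\big({\mathcal N}(A)\big)\subseteq{\mathcal N}(A)$ to be fully justified). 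The price is that your main route is only as strong as Theorem~\ref{thm2.1} itself; your sketched alternative with $AC=0\iff AD=0$ is the self-contained version and is essentially the paper's computation in disguise.
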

\begin{proof}
$(1)\Longrightarrow (2).$ Assume that $T$ is $(n,m)$-$A$-normal operator. it follows that
$$T^n \big(T^{\sharp_A}\big)^m-\big(T^{\sharp_A}\big)^mT^n=0.$$
\begin{eqnarray*}
T^n \big(T^{\sharp_A}\big)^m-\big(T^{\sharp_A}\big)^mT^n=0&\Rightarrow&  \big(T^{\sharp_A}\big)^{\sharp_Am}\big(T^n\big)^{\sharp_A}-\big(T^n\big)^{\sharp_A}\big(T^{\sharp_A}\big)^{\sharp_Am}=0\\&\Rightarrow&
\big(P_{\overline{{\mathcal R}(A)}}TP_{\overline{{\mathcal R}(A)}}\big)^m\big(T^n\big)^{\sharp_A}-\big(T^n\big)^{\sharp_A}\big(P_{\overline{{\mathcal R}(A)}}TP_{\overline{{\mathcal R}(A)}}\big)^m=0\\&\Rightarrow&
P_{\overline{{\mathcal R}(A)}}\bigg(T^m\big(T^n\big)^{\sharp_A}-\big(T^n\big)^{\sharp_A}T^m\bigg)=0.
\end{eqnarray*}
This means that $\big(T^m\big(T^n\big)^{\sharp_A}-\big(T^n\big)^{\sharp_A}T^m\big)h \in {\mathcal N}(A)$ for all $h\in {\mathcal H}$.\par \vskip 0.2 cm \noindent
On the author hand, the fact and $ {\mathcal R}(T^{\sharp_An})\subset{\mathcal R}(T^{\sharp_A})\subset \overline{{\mathcal R}(A)}$ and the assumption that
 ${\mathcal N}(A)^\bot$ is a invariant subspace of $T$ imply that
 $\big(T^m\big(T^n\big)^{\sharp_A}-\big(T^n\big)^{\sharp_A}T^m\big)h \in \overline{{\mathcal R}(A)}$ for all $h\in {\mathcal H}$. Consequently,  $\big(T^m\big(T^n\big)^{\sharp_A}-\big(T^n\big)^{\sharp_A}T^m\big)h=0$  for all $h\in {\mathcal H}$.
 Therefore $\big[T^m,\big(T^{\sharp_A}\big)^n\big]=0$. Hence $T^{\sharp_A}$ is a $(m,n)$-$A$-normal operator.\par \vskip 0.2 cm \noindent $(2)\Rightarrow (1).$  By same way hence we omit it.

\end{proof}

It is well known that if $T\in {\mathcal B}_A({\mathcal H})$ is  $A$-normal, then $T^n$ is $A$-normal. In the following theorem, we extend this result to $(n,m)$-$A$-normal operator as follows.\par \vskip 0.2 cm \noindent
\begin{theorem}\label{th21}
Let $T\in {\mathcal B}_A({\mathcal H})$. If $T$ is a $(n,m)$-$A$-normal, then  the following statements hold: \par \vskip 0.2 cm \noindent \rm(i)  $T^j$
is $A$-normal where $j$ is the least common multiple of $n$ and $m$  i.e.; $(j=LCM(n,m))$. \par \vskip 0.2 cm \noindent \rm(ii)
$T^{nm}$ is $A$-normal operator.
\end{theorem}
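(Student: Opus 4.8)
The plan is to reduce both assertions to a single commutation statement and then establish that statement by a short double induction. First I would record the algebraic reduction: since $\mathcal{B}_A(\mathcal{H})$ is a subalgebra of $\mathcal{B}(\mathcal{H})$ stable under $\sharp_A$ and satisfying $(TS)^{\sharp_A}=S^{\sharp_A}T^{\sharp_A}$, an immediate induction gives $(T^j)^{\sharp_A}=(T^{\sharp_A})^j$ for every positive integer $j$. Consequently $T^j$ is $A$-normal exactly when $(T^{\sharp_A})^jT^j=T^j(T^{\sharp_A})^j$, that is, when $\big[T^j,(T^{\sharp_A})^j\big]=0$. Thus it suffices to exhibit, for the relevant exponent $j$, a commutation between $T^j$ and $(T^{\sharp_A})^j$.

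The heart of the argument is a propagation lemma: starting from the defining identity $T^n(T^{\sharp_A})^m=(T^{\sharp_A})^mT^n$, I would show that $T^{nk}(T^{\sharp_A})^{ml}=(T^{\sharp_A})^{ml}T^{nk}$ for all integers $k,l\geq 1$. I would do this in two stages. Fixing the power of $T$ at $T^n$, an induction on $l$ gives $T^n(T^{\sharp_A})^{ml}=(T^{\sharp_A})^{ml}T^n$: in the inductive step one writes $(T^{\sharp_A})^{m(l+1)}=(T^{\sharp_A})^{ml}(T^{\sharp_A})^m$ and slides $T^n$ first past the block $(T^{\sharp_A})^m$ (the base relation) and then past $(T^{\sharp_A})^{ml}$ (the inductive hypothesis). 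With this in hand, a second induction on $k$, pushing one extra factor $T^n$ through $(T^{\sharp_A})^{ml}$ by means of the case just proved, yields the full statement.

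To finish \rm(i), set $j=\mathrm{LCM}(n,m)$ and write $j=np=mq$ with $p,q\in\mathbb{N}$. Then $T^j=T^{np}$ and $(T^{\sharp_A})^j=(T^{\sharp_A})^{mq}$, so applying the lemma with $k=p$, $l=q$ gives $\big[T^j,(T^{\sharp_A})^j\big]=0$; by the reduction above, $T^j$ is $A$-normal. For \rm(ii) I would not repeat the computation but invoke the fact recalled just before the theorem, that powers of an $A$-normal operator are again $A$-normal: since $j=\mathrm{LCM}(n,m)$ divides $nm$, write $nm=jr$, so that $T^{nm}=(T^j)^r$ is a power of the $A$-normal operator $T^j$ and is therefore $A$-normal.

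I do not anticipate a genuine obstacle: the only real content is the propagation lemma, and the remainder rests on the elementary observation that $\mathrm{LCM}(n,m)$ is a common multiple of $n$ and $m$. The points requiring care are the identity $(T^j)^{\sharp_A}=(T^{\sharp_A})^j$, which is what legitimizes the reduction to a plain commutator, and keeping the order of the factors straight across the two inductions.
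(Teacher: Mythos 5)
Your proof is correct and takes essentially the same route as the paper: the paper's proof of \rm(i) is precisely your block-swapping argument, writing $T^{j}(T^{j})^{\sharp_A}=(T^{n})^{p}\big((T^{\sharp_A})^{m}\big)^{q}$ and commuting the blocks via the defining relation (presented informally with underbraces rather than as a formal double induction), together with the same identity $(T^{qm})^{\sharp_A}=(T^{\sharp_A})^{qm}$. The only cosmetic difference is in \rm(ii), where the paper says the computation is repeated with the exponent $nm$, while you deduce it from \rm(i) using $j\mid nm$ and the fact, recalled just before the theorem, that powers of an $A$-normal operator are $A$-normal; both are valid.
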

\begin{proof} \rm(i)
Assume that  $T$ is a $(n,m)$-$A$-normal  that is $T^n \big(T^{\sharp_A}\big)^m=\big(T^{\sharp_A }\big)^mT^n$.
Let $j=pn$ and $j=qm$. By computation we get
\begin{eqnarray*}
T^{j}\big(T^{j}\big)^{\sharp_A}&=&T^{pn}\big(\big(T^{\sharp_A}\big)^{qm}\\&=&\big(T^n\big)^p\big(\big(T^{\sharp_A}\big)^m\big)^q
\\&=&\underbrace{T^n.\cdots .T^n}_{p-\text{times}}\underbrace{\big(T^{\sharp_A}\big)^m\cdots\big(T^{\sharp_A}\big)^m}_{q-\text{times}}\\&=&
\underbrace{\big(T^{\sharp_A}\big)^m\cdots\big(T^{\sharp_A}\big)^m}_{q-\text{times}}\underbrace{T^n.\cdots .T^n}_{p-\text{times}}\\&=&\big(T^{\sharp_A}\big)^{qm}T^{np}\\&=&\big(T^{qm}\big)^{\sharp_A}T^{np}\\&=&\big(T^j\big)^{\sharp_A}T^j,
\end{eqnarray*}
which means that $T^j$
is  $A$-normal.\par \vskip 0.2 cm \noindent \rm(ii) This statement is proved in the same way as in the statement \rm(i).
\end{proof}
\par\vskip 0.2 cm \noindent

\par\vskip 0.2 cm \noindent
\begin{proposition}\label{pro22}
Let $T\in {\mathcal B}_A({\mathcal H})$, $X=T^n+ \big(T^{\sharp_A}\big)^m$, $Y=T^n-\big(T^{\sharp_A}\big)^m$  and $Z=T^n \big(T^{\sharp_A}\big)^m$. The following statements hold:\par\vskip 0.2 cm \noindent $(1)$ $T$ is $(n,m)$-$A$-normal if and only if  $ \big[X, Y \big]=0.$\par \vskip 0.2 cm \noindent $(2)$ If $T$ is of class $[(n,m){\bf{ N}}]_A$, then  $\big[Z,X\big]=\big[Z,Y\big]=0$.
\par \vskip 0.2 cm \noindent $(3)$  $T$ is of class $[(n,m){\bf{N}}]_A$ if and only if $\big[T^n, X\big]=0$.
\par \vskip 0.2 cm \noindent $(4)$  $T$ is of class $[(n,m){\bf{ N}}]_A$ if and only if $\big[T^n, Y\big]=0$.
\end{proposition}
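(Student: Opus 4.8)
The plan is to abbreviate $P := T^n$ and $Q := \big(T^{\sharp_A}\big)^m$, so that by Definition \ref{def2.1} the operator $T$ belongs to $[(n,m){\bf N}]_A$ precisely when $[P,Q] = PQ - QP = 0$. With this notation $X = P+Q$, $Y = P-Q$ and $Z = PQ$, and every assertion becomes a statement about the commutator of elements of the (noncommutative) algebra $\mathcal{B}_A(\mathcal{H})$. The only tools needed are the bilinearity and antisymmetry of the bracket $[\cdot,\cdot]$ together with the vanishing of the self-commutators $[P,P]=[Q,Q]=0$.

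For parts $(1)$, $(3)$ and $(4)$ I would simply expand. Using bilinearity and $[Q,P]=-[P,Q]$,
\[
[X,Y] = [P+Q,\,P-Q] = [P,P] - [P,Q] + [Q,P] - [Q,Q] = -2[P,Q];
\]
likewise $[T^n,X] = [P,P+Q] = [P,Q]$ and $[T^n,Y] = [P,P-Q] = -[P,Q]$. In each case the resulting commutator is a nonzero scalar multiple of $[P,Q]$, so it vanishes if and only if $[P,Q]=0$, i.e. if and only if $T$ is $(n,m)$-$A$-normal. This settles the three equivalences at once.

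For part $(2)$ I would argue from the hypothesis $[P,Q]=0$, which makes $P$ and $Q$ a commuting pair. Then $Z=PQ$, $X=P+Q$ and $Y=P-Q$ are all (noncommutative) polynomials in the two commuting operators $P$ and $Q$, so any two of them commute. Concretely, from $QP=PQ$ one gets $PQP = P^2Q$ and $QPQ = PQ^2$, so that $[PQ,P] = PQP - P^2Q = 0$ and $[PQ,Q] = PQ^2 - QPQ = 0$, whence $[Z,X] = [PQ,P] + [PQ,Q] = 0$ and $[Z,Y] = [PQ,P] - [PQ,Q] = 0$.

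The computations are entirely routine, so there is no real obstacle; the only points to keep in mind are to respect the order of factors when expanding (the algebra is noncommutative until $(n,m)$-$A$-normality is invoked), and to note that statement $(2)$ is asserted only as an implication. The vanishing of $[Z,X]$ and $[Z,Y]$ need not force $[P,Q]=0$, so no converse is claimed there.
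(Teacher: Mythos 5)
Your proof is correct and follows essentially the same route as the paper: the paper proves $(1)$ by expanding $XY-YX$ term by term (arriving at the same identity $[X,Y]=-2\big[T^n,(T^{\sharp_A})^m\big]$ that you obtain via bilinearity of the bracket) and dismisses $(2)$--$(4)$ as straightforward. Your version is just a cleaner packaging of the same computation, with the added merit of writing out the omitted parts explicitly and correctly noting that $(2)$ is only an implication.
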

\begin{proof}$(1)$
\begin{eqnarray*}
&&\big[ X,Y\big]=XY-YX=0\\&\Leftrightarrow& \bigg(\big( T^n+\big(T^{\sharp_A}\big)^{m} \big)\big(T^n-\big(T^ {\sharp_A}\big)^{m}\big)\bigg)-\bigg(\big( T^n-\big(T^{\sharp_A}\big)^{m}\big)\big( T^n+\big(T^{\sharp_A}\big)^{m}
\big)\bigg)=0\\&\Leftrightarrow& T^{2n}-T^n \big(T^{\sharp_A}\big)^{m}+\big(T^{\sharp_A}\big)^{m}T^n-\big(T^{\sharp_A}\big)^{2m}\\&&-T^{2n}-T^n\big(T^{\sharp_A}\big)^{m}+\big(T^{\sharp_A}\big)^{m}T^n-\big(T^{\sharp}\big)^{2m}=0\\&\Leftrightarrow&
T^n\big(T^{\sharp_A}\big)^{m}-\big(T^{\sharp}\big)^{m}T^n=0\\&\Leftrightarrow& \big[ T^n, \big(T^{\sharp_A}\big)^m\big]=0.
\end{eqnarray*}
Hence $\big[X, Y\big]=0$ if and only if $T$ is $(n,m)$-$A$-normal.\par \vskip 0.2 cm \noindent The proof of statements $(2)$,$(3)$ and $(4)$ are straightforward.
\end{proof}

\begin{proposition}
Let $T , V \in {\mathcal B}_A({\mathcal H})$ such that ${\mathcal N}(A)$  is a reducing subspace for both $ T$ and
$V.$ If $T$ is an $(n, m)$-$A$-normal operator and $V$ is an $A$-isometry, then $V T V^{\sharp_A }$ is an
$(n, m)$-$A$-normal operator.
\end{proposition}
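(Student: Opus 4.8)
Write $S:=VTV^{\sharp_A}$ and abbreviate $P:=P_{\overline{{\mathcal R}(A)}}$. The plan is to reduce every manipulation to bookkeeping with the single projection $P$, and to invoke the $(n,m)$-$A$-normality of $T$ only at the very last step. First I would record the elementary facts that drive the whole argument. Since ${\mathcal R}(T^{\sharp_A})\subseteq\overline{{\mathcal R}(A)}$ and ${\mathcal R}(V^{\sharp_A})\subseteq\overline{{\mathcal R}(A)}$, one has $PT^{\sharp_A}=T^{\sharp_A}$ and $PV^{\sharp_A}=V^{\sharp_A}$; since $V$ is an $A$-isometry one has $V^{\sharp_A}V=P$; and since ${\mathcal N}(A)$ (equivalently $\overline{{\mathcal R}(A)}={\mathcal N}(A)^\bot$) reduces both $T$ and $V$, the projection $P$ commutes with $T$ and with $V$, that is $PT=TP$ and $PV=VP$.

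Next I would put the $A$-adjoint of $S$ into closed form. Using $(XYZ)^{\sharp_A}=Z^{\sharp_A}Y^{\sharp_A}X^{\sharp_A}$ together with the identity $\big(V^{\sharp_A}\big)^{\sharp_A}=PVP$ stated in the preliminaries, and then applying $PV=VP$ followed by $PT^{\sharp_A}=T^{\sharp_A}$, I expect to arrive at the clean expression $S^{\sharp_A}=V\,T^{\sharp_A}\,V^{\sharp_A}$.

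The heart of the computation is a telescoping argument. Inserting $V^{\sharp_A}V=P$ at each internal junction of a product, then sliding $P$ through the powers of $T$ by $PT=TP$ and absorbing the resulting trailing $P$ into $V^{\sharp_A}$ by $PV^{\sharp_A}=V^{\sharp_A}$, I would prove by induction on the exponent that $S^n=V\,T^n\,V^{\sharp_A}$. In exactly the same way, now using $PT^{\sharp_A}=T^{\sharp_A}$ to clear the internal projections, I would obtain $\big(S^{\sharp_A}\big)^m=V\,\big(T^{\sharp_A}\big)^m\,V^{\sharp_A}$.

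Finally I would multiply these two closed forms in both orders. Reinserting $V^{\sharp_A}V=P$ in the middle and then using $P\big(T^{\sharp_A}\big)^m=\big(T^{\sharp_A}\big)^m$, $PT^n=T^nP$, and $PV^{\sharp_A}=V^{\sharp_A}$, the products collapse to $S^n\big(S^{\sharp_A}\big)^m=V\,T^n\big(T^{\sharp_A}\big)^m\,V^{\sharp_A}$ and $\big(S^{\sharp_A}\big)^mS^n=V\,\big(T^{\sharp_A}\big)^mT^n\,V^{\sharp_A}$. The hypothesis $T\in[(n,m){\bf N}]_A$, i.e.\ $T^n\big(T^{\sharp_A}\big)^m=\big(T^{\sharp_A}\big)^mT^n$, then forces these two to coincide, which is precisely $(n,m)$-$A$-normality of $S=VTV^{\sharp_A}$. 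I expect the main obstacle to be purely organizational: one must verify that every stray factor of $P$ can be eliminated, and this is exactly where the two standing hypotheses are indispensable. The reducing assumption is what lets $P$ commute past $T$ and $V$, while the range inclusions are what let $P$ be absorbed into the sharp-adjoints; without the reducing hypothesis the closed forms for $S^{\sharp_A}$ and $S^n$ would retain extra projections that do not cancel, and the final identification would fail.
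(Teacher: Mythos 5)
Your proposal is correct, and its computational engine is the same as the paper's: both proofs hinge on the telescoping identities obtained by inserting $V^{\sharp_A}V=P_{\overline{{\mathcal R}(A)}}$ at each internal junction, commuting the projection past $T$ and $V$ via the reducing hypothesis, and absorbing it into the sharp-adjoints via the range inclusions, yielding $S^n=VT^nV^{\sharp_A}$ and $\big(S^{\sharp_A}\big)^m=V\big(T^{\sharp_A}\big)^mV^{\sharp_A}$ (the paper writes these with a harmless leading $P_{\overline{{\mathcal R}(A)}}$, which equals your cleaner form). Where you diverge is the finish: the paper feeds these closed forms into the characterization of Theorem \ref{thm2.1}, checking the semi-inner-product identity $\left\langle (S^{\sharp_A})^mh\,|\,(S^{\sharp_A})^nh\right\rangle_A=\left\langle S^nh\,|\,S^mh\right\rangle_A$ together with the range condition ${\mathcal R}\big(S^n(S^{\sharp_A})^m\big)\subseteq\overline{{\mathcal R}(A)}$, whereas you multiply the closed forms in both orders, collapse the middle $V^{\sharp_A}V$ to $P$, and verify the commutator identity $S^n(S^{\sharp_A})^m=(S^{\sharp_A})^mS^n$ directly from Definition \ref{def2.1}. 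Your route is slightly more economical, since it bypasses the characterization theorem entirely and makes the role of each standing hypothesis transparent; the paper's route has the mild advantage of exercising its own Theorem \ref{thm2.1}. Both are valid, and your identification $S^{\sharp_A}=VT^{\sharp_A}V^{\sharp_A}$ via $\big(V^{\sharp_A}\big)^{\sharp_A}=P_{\overline{{\mathcal R}(A)}}VP_{\overline{{\mathcal R}(A)}}$ is exactly the justification the paper leaves implicit.
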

\begin{proof}
Since $V$ is an $A$-isometry then $V^{\sharp_A}V=P_{\overline{{\mathcal R}(A) }}$. Moreover from the fact that
${\mathcal N}(A)$  is a reducing subspace for both $ T$ and
$V$ we have $$P_{\overline{{\mathcal R}(A) }}T=TP_{\overline{{\mathcal R}(A) }},\;\;T^{\sharp_A}P_{\overline{{\mathcal R}(A) }}=P_{\overline{{\mathcal R}(A) }}T^{\sharp_A}$$ and
$$ VP_{\overline{{\mathcal R}(A)}}=P_{\overline{{\mathcal R}(A)}}V\;\;\;V^{\sharp_A}P_{\overline{{\mathcal R}(A) }}=P_{\overline{{\mathcal R}(A) }}V^{\sharp_A}.$$
It easily to check that
\begin{eqnarray*}
\big(VTV^{\sharp_A}\big)^j&=& \underbrace{\big(VTV^{\sharp_A}\big)\big(VTV^{\sharp_A}\big).\cdots \big(VTV^{\sharp_A}\big)}_{j-\text{times}}\\&=& \big(VT P_{\overline{{\mathcal R}(A)}}TV^{\sharp_A}\big)\cdots \big(VTV^{\sharp_A}\big)\\&=&P_{\overline{{\mathcal R}(A)}}VT^2V^{\sharp_A}\cdots \big(VTV^{\sharp_A}\big)\\&=&
\vdots \\&=&P_{\overline{{\mathcal R}(A)}}VT^jV^{\sharp_A}.
\end{eqnarray*}
The same arguments yield the following
\begin{eqnarray*}
\big(VTV^{\sharp_A}\big)^{\sharp_Aj}&=&\underbrace{\big(VTV^{\sharp_A}\big)^{\sharp_A}\big(VTV^{\sharp_A}\big)^{\sharp_A}\cdots \big(VTV^{\sharp_A}\big)^{\sharp_A}}_{j-\text{times}}\\&=&
\big( P_{\overline{{\mathcal R}(A)}}VP_{\overline{{\mathcal R}(A)}}T^{\sharp_A}V^{\sharp_A}\big)\cdots \big( P_{\overline{{\mathcal R}(A)}}VP_{\overline{{\mathcal R}(A)}}T^{\sharp_A}V^{\sharp_A}\big)\\&=&\vdots\\&=&
P_{\overline{{\mathcal R}(A)}} V\big(T^{\sharp_A}\big)^jV^{\sharp_A}.
\end{eqnarray*}

From the above calculation, we deduce that

\begin{eqnarray}\label{eq2.2}
\left\langle \big\{\big(VTV^{\sharp_A}\big)^{\sharp_A}\big\}^ mh\;|\;\big\{\big(VTV^{\sharp_A}\big)^{\sharp_A } \big\}^nh\right\rangle_A&=&
\left\langle P_{\overline{{\mathcal R}(A)}} V\big(T^{\sharp_A}\big)^mV^{\sharp_A}h\;|\;P_{\overline{{\mathcal R}(A)}} V\big(T^{\sharp_A}\big)^nV^{\sharp_A}h\right\rangle_A\nonumber\\&=&
\left\langle \big(T^{\sharp_A}\big)^mV^{\sharp_A}h\;|\; \big(T^{\sharp_A}\big)^nV^{\sharp_A}h\right\rangle_A
\end{eqnarray}
It is also easy
to show that
\begin{eqnarray}\label{eq2.3}
\left\langle \big(VTV^{\sharp_A}\big)^ nh\;|\;\big(VTV^{\sharp_A}\big)^mh\right\rangle_A&=&
\left\langle P_{\overline{{\mathcal R}(A)}}VT^nV^{\sharp_A}h\;|\;P_{\overline{{\mathcal R}(A)}}VT^mV^{\sharp_A}h\right\rangle_A\nonumber
\\&=&\left\langle T^nV^{\sharp_A}h\;|\;T^mV^{\sharp_A}h\right\rangle_A.
\end{eqnarray}
Since $T$ is a $(n,m)$-$A$-normal we have by combining (\ref{eq2.2})  and (\ref{eq2.3})
$$\left\langle \big\{\big(VTV^{\sharp_A}\big)^{\sharp_A}\big\}^ mh\;|\;\big\{\big(VTV^{\sharp_A}\big)^{\sharp_A } \big\}^nh\right\rangle_A=\left\langle \big(VTV^{\sharp_A}\big)^ nh\;|\;\big(VTV^{\sharp_A}\big)^mh\right\rangle_A,\;\;\;\forall\;h\in {\mathcal H}.
$$
On the other hand, we have
\begin{eqnarray*}{\mathcal R}\bigg( \big(VTV^{\sharp_A}\big)^n\big\{\big(VTV^{\sharp_A}\big)^{\sharp_A}\big\}^ m\bigg)
&=&{\mathcal R}\bigg(P_{\overline{{\mathcal R}(A)}}VT^nV^{\sharp_A} P_{\overline{{\mathcal R}(A)}} V\big(T^{\sharp_A}\big)^mV^{\sharp_A}\bigg)\\&=&
{\mathcal R}\bigg(P_{\overline{{\mathcal R}(A)}}VT^n \big(T^{\sharp_A}\big)^mV^{\sharp_A}\bigg)\\&\subseteq&\overline{{\mathcal R}(P_{\overline{{\mathcal R}(A)}})}
\\&\subseteq& \overline{{\mathcal R}(A)}.
\end{eqnarray*}
\noindent in view of Theorem \ref{thm2.1}, it follows that $VTV^{\sharp_A}$ is a $(n,m)$-$A$-normal operator.
\end{proof}

\begin{proposition}
Let  $T\in {\mathcal B}_A({\mathcal H})$  and  $S\in {\mathcal B}_A({\mathcal H})$ such that $TS=ST$ and $ST^{\sharp_A}=
T^{\sharp_A}S$. If $T$ is $(n,n)$-$A$-normal. The following statements hold:\par \vskip 0.2 cm \noindent
$(1)$ If $S$ is $A$-self adjoint, then $TS$ is $(n,n)$-$A$-normal operator.\par \vskip 0.2 cm \noindent $(2)$ If $S$ is $A$-normal operator, then $TS$ is $(n,n)$-$A$-normal operator.
\end{proposition}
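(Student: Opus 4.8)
The plan is to avoid establishing the operator identity $\big[(TS)^n,((TS)^{\sharp_A})^n\big]=0$ by direct algebra, and instead to verify the two conditions of Theorem~\ref{thm2.1} (taken with $m=n$) for the operator $TS$. The reason for this choice is an obstacle worth flagging at the outset: a purely algebraic reduction of the commutator would, at one point, require $T$ to commute with $S^{\sharp_A}$, which is \emph{not} among the hypotheses (we are only given $TS=ST$ and $ST^{\sharp_A}=T^{\sharp_A}S$, not $TS^{\sharp_A}=S^{\sharp_A}T$). Routing the argument through the $A$-seminorm characterization circumvents exactly this gap, and does so uniformly for both $(1)$ and $(2)$.

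First I would record the elementary commutation relations. Since $TS=ST$ we have $(TS)^n=T^nS^n$, and applying $\sharp_A$ to $TS=ST$ together with $(XY)^{\sharp_A}=Y^{\sharp_A}X^{\sharp_A}$ gives $S^{\sharp_A}T^{\sharp_A}=T^{\sharp_A}S^{\sharp_A}$, whence $((TS)^{\sharp_A})^n=(S^{\sharp_A})^n(T^{\sharp_A})^n=(T^{\sharp_A})^n(S^{\sharp_A})^n$. From $ST^{\sharp_A}=T^{\sharp_A}S$ I obtain $S^n(T^{\sharp_A})^n=(T^{\sharp_A})^nS^n$, and the hypothesis $T\in[(n,n){\bf N}]_A$ gives $T^n(T^{\sharp_A})^n=(T^{\sharp_A})^nT^n$. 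Here I use repeatedly that $(S^n)^{\sharp_A}=(S^{\sharp_A})^n$.

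The key step is a seminorm identity that unifies the two cases: $\|S^nw\|_A=\|(S^{\sharp_A})^nw\|_A$ for every $w\in{\mathcal H}$. In case $(2)$, since $S$ is $A$-normal, $S^n$ is $A$-normal, and an $A$-normal operator $Y$ satisfies $\|Yw\|_A=\|Y^{\sharp_A}w\|_A$ by the characterization of \cite{AS} recalled just before Theorem~\ref{thm2.1}; applying this to $Y=S^n$ yields the identity. In case $(1)$, I would first note that $S$ $A$-self-adjoint forces $S^n$ to be $A$-self-adjoint (from $AS=S^*A$ by induction), and that any $A$-self-adjoint $Y$ satisfies $A\big(YY^{\sharp_A}-Y^{\sharp_A}Y\big)=0$, because $AY^{\sharp_A}=Y^*A=AY$ gives $AYY^{\sharp_A}=Y^*Y^*A=AY^{\sharp_A}Y$; consequently $\|Y^{\sharp_A}w\|_A^2-\|Yw\|_A^2=\langle w\,|\,A(YY^{\sharp_A}-Y^{\sharp_A}Y)w\rangle=0$, which is the identity for $Y=S^n$. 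I expect this $A$-self-adjoint subcase to be the main technical point, since $A$-self-adjointness only yields $A$-normality ``modulo ${\mathcal N}(A)$'' and one must check that this is still enough at the level of the seminorm.

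With the identity in hand, condition $(1)$ of Theorem~\ref{thm2.1} for $TS$ follows from the chain $\|(TS)^nh\|_A=\|T^nS^nh\|_A=\|(T^{\sharp_A})^nS^nh\|_A=\|S^n(T^{\sharp_A})^nh\|_A=\|(S^{\sharp_A})^n(T^{\sharp_A})^nh\|_A=\|((TS)^{\sharp_A})^nh\|_A$, where the second equality uses $T\in[(n,n){\bf N}]_A$ applied to $S^nh$, the third uses $S^n(T^{\sharp_A})^n=(T^{\sharp_A})^nS^n$, and the fourth is the key identity applied to $w=(T^{\sharp_A})^nh$. For condition $(2)$ I would rearrange, using only the recorded relations, $(TS)^n((TS)^{\sharp_A})^n=T^nS^n(S^{\sharp_A})^n(T^{\sharp_A})^n=(T^{\sharp_A})^nT^nS^n(S^{\sharp_A})^n$, so that ${\mathcal R}\big((TS)^n((TS)^{\sharp_A})^n\big)\subseteq{\mathcal R}\big((T^{\sharp_A})^n\big)\subseteq{\mathcal R}(T^{\sharp_A})\subseteq\overline{{\mathcal R}(A)}$; note that this computation needs neither the key identity nor any commutation of $T$ with $S^{\sharp_A}$. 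Both conditions of Theorem~\ref{thm2.1} then yield that $TS\in[(n,n){\bf N}]_A$, settling $(1)$ and $(2)$ simultaneously.
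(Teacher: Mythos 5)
Your proof is correct and follows essentially the same route as the paper: both arguments verify the two conditions of Theorem \ref{thm2.1} (with $m=n$) for the product $TS$, using the relation $A\big(S^{\sharp_A}\big)^n=AS^n$ in the $A$-self-adjoint case, the $A$-normality of $S^n$ in the $A$-normal case, and the recorded commutation relations to pass between $\big\|\big((TS)^{\sharp_A}\big)^nh\big\|_A$ and $\big\|(TS)^nh\big\|_A$. Your only departures are organizational: you isolate the seminorm identity $\|S^nw\|_A=\|(S^{\sharp_A})^nw\|_A$ as a single lemma covering both cases where the paper runs two separate inner-product computations, and your opening diagnosis --- that a purely algebraic commutator argument stalls for lack of the relation $TS^{\sharp_A}=S^{\sharp_A}T$, which is exactly why the seminorm characterization is the right tool --- is accurate.
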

\begin{proof}
$(1)$ Let $h\in {\mathcal H}$, under the assumptions that $S$  is $A$-self-adjoint $\big(AS=S^*A\big)$ and the statement $(1)$ of Theorem \ref{thm2.1} we have
\begin{eqnarray*}
\left\langle  \big(TS\big)^{\sharp_An}h\;|\;\big(TS\big)^{\sharp_An}h\right\rangle_A&=&
\left\langle  \big(S\big)^{\sharp_An} \big(T)^{\sharp_An}h\;|\; \big(S\big)^{\sharp_An} \big(T\big)^{\sharp_An}h\right\rangle_A\\&=&
\left\langle  A\big(S\big)^{\sharp_An} \big(T)^{\sharp_An}h\;|\; \big(S\big)^{\sharp_An} \big(T\big)^{\sharp_An}h\right\rangle\\&=&\left\langle  \big(S^*\big)^{n} A\big(T)^{\sharp_An}h\;|\; \big(S\big)^{\sharp_An} \big(T\big)^{\sharp_An}h\right\rangle\\&=&
\left\langle A \big(S\big)^{n} \big(T)^{\sharp_An}h\;|\; \big(S\big)^{\sharp_An} \big(T\big)^{\sharp_An}h\right\rangle\\&=&
\left\langle A  \big(T)^{\sharp_An}S^{n}h\;|\; \big(S\big)^{\sharp_An} \big(T\big)^{\sharp_An}h\right\rangle\\&=&
\left\langle   \big(T)^{\sharp_An}S^{n}h\;|\;A \big(S\big)^{\sharp_An} \big(T\big)^{\sharp_An}h\right\rangle\\&=&
\left\langle   \big(T)^{\sharp_An}S^{n}h\;|\;  \big(T\big)^{\sharp_An}S^nh\right\rangle_A\\&=&
\left\langle   T^nS^{n}h\;|\;  T^{n}S^nh\right\rangle_A\\&=&
\left\langle   \big(TS\big)^{n}h\;|\;  \big(TS\big)^nh\right\rangle_A.
\end{eqnarray*}
On the other hand, we have
$${\mathcal R}\big(  \big(TS\big)^{n} \big(TS\big)^{\sharp_An} \big)={\mathcal R}\big(T^nT^{\sharp_An}S^nS^{\sharp_An}\big)
\subseteq \overline{{\mathcal R}(A)}.$$
This means that $TS$ is a $(n,n)$-$A$-normal operator by Theorem \ref{thm2.1}.\par\vskip 0.2 cm \noindent
$(2)$ Let $S$ is a $A$-normal operator that is $SS^{\sharp_A}=S^{\sharp_A}S$ and  because $T$ is $(n,n)$-$A$-normal operator we get the following relations
\begin{eqnarray*}
\left\langle \big(ST)^{\sharp_An}h\;|\;\big(ST)^{\sharp_An}h\right\rangle_A&=&\left\langle S^{\sharp_An}T^{\sharp_An}h\;|\;
S^{\sharp_An}T^{\sharp_An}h\right\rangle_A\\&=&\left\langle AS^{\sharp_An}T^{\sharp_An}h\;|\;
S^{\sharp_An}T^{\sharp_An}h\right\rangle
\\&=&\left\langle S^{*n}AT^{\sharp_An}h\;|\;
S^{\sharp_An}T^{\sharp_An}h\right\rangle
\\&=&\left\langle T^{\sharp_An}h\;|\;
S^nS^{\sharp_An}T^{\sharp_An}h\right\rangle_A
\\&=&\left\langle T^{\sharp_An}h\;|\;
\big(S^{\sharp_A}\big)^nS^nT^{\sharp_An}h\right\rangle_A
\\&=&\left\langle S^nT^{\sharp_An}h\;|\;
S^nT^{\sharp_An}h\right\rangle_A
\\&=&\left\langle T^{\sharp_An}S^nh\;|\;
T^{\sharp_An}S^nh\right\rangle_A
\\&=&\left\langle T^{n}S^{n}h\;|\;
T^{n}S^{n}h\right\rangle_A\;\;\quad \big( \text{since}\;T \text{is}\;(n,n)-A-\text{normal}\big)
\\&=&\left\langle \big(TS\big)^nh\;|\;
\big(TS\big)^nh\right\rangle_A.
\end{eqnarray*}
On the other hand, based on  the $(n,n)$-$A$-normality of $T$ we will get the following inclusion
$${\mathcal R}\big( \big(TS\big)^n\big(TS\big)^{\sharp_An}\big)={\mathcal R}\big( T^nS^nT^{\sharp_An}S^{\sharp_An}\big)\subseteq
{\mathcal R}\big( T^nT^{\sharp_An}\big)\subseteq \overline{{\mathcal R}(A)}.$$
From the items $(1)$ and $(2)$ of Theorem \ref{thm2.1}, the operator $TS$ is $(n,n)$-$A$-normal operator.
\end{proof}
\par \vskip 0.2 cm \noindent In the following proposition, we study the relation between the two classes $[(2,m){\bf{ N}}]_A$ and $[(3,m){\bf { N}}]_A.$
\par \vskip 0.2 cm \noindent
\begin{proposition} \label{pro24} Let $T\in {\mathcal B}_A({\mathcal H})$
 such that $T\in  [(2,m){\bf {N}}]_A \wedge [(3,m){\bf {N}}]_A$  for some positive integer $m$, then $T\in [(n,m){\bf { N}}]_A$ for
all positive integer $n\geq 4$.
\end{proposition}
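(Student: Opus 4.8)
The plan is to strip the statement of all its semi-Hilbertian decoration and reduce it to a bare commutation fact. Write $B := \big(T^{\sharp_A}\big)^m$, a fixed operator in $\mathcal{B}_A(\mathcal{H})$. Unwinding Definition~\ref{def2.1}, the hypotheses $T\in[(2,m)\mathbf{N}]_A$ and $T\in[(3,m)\mathbf{N}]_A$ say exactly that $T^2B=BT^2$ and $T^3B=BT^3$, while the desired conclusion $T\in[(n,m)\mathbf{N}]_A$ amounts to $T^nB=BT^n$. So everything comes down to showing that $T^n$ commutes with $B$ for every $n\ge 4$, given that $T^2$ and $T^3$ do. Note that $A$, the semi-inner product, and Theorem~\ref{thm2.1} play no role; only the algebra of powers of $T$ against the single operator $B$ is needed.

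The key observation is that the set $\mathcal{S}:=\{k\in\mathbb{N}\ :\ T^kB=BT^k\}$ is closed under addition: if $T^jB=BT^j$ and $T^kB=BT^k$, then $T^{j+k}B=T^j\big(T^kB\big)=T^j B T^k=B\,T^{j+k}$. Since $2,3\in\mathcal{S}$ by hypothesis and the numerical semigroup generated by $2$ and $3$ contains every integer $\ge 2$ (its Frobenius number is $1$), we obtain $\{2,3,4,\dots\}\subseteq\mathcal{S}$, which is precisely the claim. Concretely I would phrase this as an induction on $n$: the cases $n=2,3$ are the two hypotheses, and for $n\ge 4$ one has $n-2\ge 2$, so $T^{n-2}$ already commutes with $B$ (directly when $n-2\in\{2,3\}$, by the inductive hypothesis otherwise), whence
\[
T^nB=T^{n-2}\big(T^2B\big)=T^{n-2}B\,T^2=B\,T^{n-2}T^2=B\,T^n .
\]
This closes the induction and gives $\big[T^n,\big(T^{\sharp_A}\big)^m\big]=0$ for all $n\ge 4$, i.e. $T\in[(n,m)\mathbf{N}]_A$.

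I do not anticipate any genuine obstacle here, since the proof uses nothing beyond the given commutativity of $T^2$ and $T^3$ with $B$ and the fact that products of operators commuting with $B$ again commute with $B$. The only point deserving care is the base of the induction: one must make sure that the inductive step $T^n=T^{n-2}T^2$ is invoked only when $n-2$ already lies in the range where commutativity with $B$ is known, which is guaranteed for every $n\ge 4$. It is worth remarking, finally, that the same decomposition shows $T\in[(n,m)\mathbf{N}]_A$ for all $n\ge 2$, so the restriction $n\ge 4$ in the statement is merely to emphasise the new content beyond the two hypotheses.
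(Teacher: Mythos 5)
Your proof is correct and follows essentially the same route as the paper's: both arguments reduce the statement to the observation that the set of exponents $k$ with $T^k\big(T^{\sharp_A}\big)^m=\big(T^{\sharp_A}\big)^mT^k$ is closed under addition, and then induct upward from the generators $2$ and $3$. Your packaging via the decomposition $T^n=T^{n-2}T^2$ (and the numerical-semigroup remark that in fact all $n\ge 2$ are covered) is a slightly cleaner organization of the same induction the paper carries out with base cases $n=4,5$ and the step $T^{n+1}=T\cdot T^2\cdot T^{n-2}$.
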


\begin{proof}
 It is obvious form Definition \ref{def2.1} that if  $T\in [(2,m){\bf {N}}]_A$ then $ T\in [(4,m){\bf {N}}]_A$.\par \vskip 0.2 cm \noindent
However  $T\in  [(2,m){\bf {N}}]_A \wedge [(3,m){\bf {N}}]_A$ implies that $T\in [(5,m){\bf {N}}]_A.$
\par \vskip 0.2 cm \noindent  Assume that $T\in  [(n,m){\bf {N}}]_A $  for $n\geq 5$ that is
  $$
 T^n(T^{\sharp_A})^m=(T^{\sharp_A})^mT^n.
$$
Then we have
\begin{eqnarray*}
\big[ T^{n+1},(T^{\sharp_A})^m \big]&=&T^{n+1}(T^{\sharp_A})^m-(T^{\sharp_A})^mT^{n+1}
\\&=&T(T^{\sharp_A})^mT^{n}-(T^{\sharp_A})^mT^{n+1}
\\&=&T(T^{\sharp_A})^mT^2T^{n-2}-(T^{\sharp_A})^mT^{n+1}\nonumber\\&=&T^3(T^{\sharp_A})^mT^{n-2}-(T^{\sharp_A})^mT^{n+1}\\&=&(T^{\sharp_A})^mT^{n+1}-(T^{\sharp_A})^mT^{n+1}
\\&=&0.
\end{eqnarray*}
This means that $T\in [(n+1,m){\bf{ N}}]_A.$ The proof is complete.
\end{proof}
\begin{proposition}\label{pro25}
Let $T\in {\mathcal B}_A({\mathcal H})$. If $T\in [(n,m){\bf{ N}}]_A\wedge [(n+1,m){\bf{N}}]_A$, then $T\in
 [(n+2,m){\bf{ N}}]_A$ for some positive integers $n$ and $m$. In particular $T\in [(j,m){\bf{ N}}]_A$ for all $j\geq n$.
\end{proposition}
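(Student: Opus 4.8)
The plan is to strip the statement down to a purely algebraic fact about two commuting-type relations and then close the argument by a one-line induction. Writing $B:=\big(T^{\sharp_A}\big)^m$ to lighten the notation, the hypothesis $T\in [(n,m){\bf N}]_A\wedge [(n+1,m){\bf N}]_A$ reads
$$T^nB=BT^n \qquad\text{and}\qquad T^{n+1}B=BT^{n+1},$$
and the conclusion $T\in[(j,m){\bf N}]_A$ amounts to $[T^j,B]=0$ for every $j\ge n$. Observe that no feature of $A$, of the reduced adjoint, or of the semi-Hilbertian structure is needed beyond the defining commutator identity, since everything becomes an identity in the two operators $T$ and $B$.

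First I would extract from the two hypotheses the key relation $[T,B]\,T^n=0$. Starting from the second hypothesis $T^{n+1}B=BT^{n+1}$, I rewrite the left-hand side as $T^{n+1}B=T\,(T^nB)=T\,(BT^n)=TBT^n$, where the middle step uses the first hypothesis $T^nB=BT^n$; the right-hand side is simply $BT^{n+1}=(BT)\,T^n$. Subtracting gives $(TB-BT)\,T^n=0$, that is $[T,B]\,T^n=0$. Multiplying on the right by $T^{\,j-n}$ then yields $[T,B]\,T^{\,j}=0$ for every $j\ge n$.

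The second step is the commutator recursion
$$[T^{\,j+1},B]=T\,[T^{\,j},B]+[T,B]\,T^{\,j},$$
which follows by a direct expansion of both sides. For every $j\ge n$ the last summand vanishes by the previous step, so the recursion collapses to $[T^{\,j+1},B]=T\,[T^{\,j},B]$. Since $[T^n,B]=0$ by hypothesis, an induction on $j$ beginning at $j=n$ forces $[T^{\,j},B]=0$ for all $j\ge n$. Taking $j=n+2$ gives $T\in[(n+2,m){\bf N}]_A$, and the same chain gives $T\in[(j,m){\bf N}]_A$ for all $j\ge n$, which is the "in particular" clause.

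The only genuinely substantive point is the first step: recognizing that the two consecutive hypotheses $(n,m)$ and $(n+1,m)$ do not force the full commutation $[T,B]=0$ (which indeed may fail), but only the weaker $[T,B]\,T^n=0$, and that this weaker relation is nonetheless exactly what the recursion needs in order to propagate. Everything after that is a routine induction, so I expect the write-up to be short once the relation $[T,B]\,T^n=0$ is isolated.
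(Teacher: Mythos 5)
Your proof is correct and follows essentially the same route as the paper's: both arguments use the $(n,m)$ and $(n+1,m)$ relations to slide $B=\big(T^{\sharp_A}\big)^m$ past one extra factor of $T$, the paper doing so by a direct chain of substitutions in $T^{n+2}B$. Your isolation of the identity $[T,B]\,T^{\,n}=0$ together with the recursion $[T^{\,j+1},B]=T\,[T^{\,j},B]+[T,B]\,T^{\,j}$ is a tidier packaging of that computation, and it turns the ``in particular'' clause --- which the paper dispatches with ``by repeating this process'' --- into a genuine one-line induction.
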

\begin{proof}
Let $T \in [(n,m){\bf{ N}}]_A\wedge [(n+1,m){\bf{ N}}]_A$, it follows that
$$T^n\big(T^{\sharp_A}\big)^m-\big(T^{\sharp_A}\big)^mT^n=0\;\;\text{and}\;\;T^{n+1}\big(T^{\sharp_A}\big)^m-
\big(T^{\sharp_A}\big)^mT^{n+1}=0.$$
Note that
\begin{eqnarray*}
\big[T^{n+2},  \big(T^{\sharp_A}\big)^m\big]&=&T^{n+2}\big(T^{\sharp_A}\big)^m-
\big(T^{\sharp_A}\big)^mT^{n+2}\\&=& TT^{n+1} \big(T^{\sharp_A }\big)^m-\big(T^{\sharp_A}\big)^mT^{n+2}\\&=&
T\big(T^{\sharp_A}\big)^mT^{n+1}-\big(T^{\sharp_A}\big)^mT^{n+2}
\\&=&TT^n\big(T^{\sharp_A}\big)^mT-\big(T^{\sharp_A}\big)^mT^{n+2}\\&=&\big(T^{\sharp_A}\big)^mT^{n+2}-\big(T^{\sharp_A}\big)^mT^{n+2}\\&=&0.
\end{eqnarray*}
Hence $T\in [(n+2,m){\bf{ N}}]_A$. By repeating this process we can prove that $T\in[(j,m){\bf{ N}}]_A$ for all $j\geq n$.
\end{proof}

\vspace{-2mm}
\begin{proposition}\label{pro26} Let $T\in {\mathcal B}_A({\mathcal H})$.
If $T\in  [(n,m){\bf { N}}]_A\wedge (n+1,m){\bf{ N}}]_A$ such that
$T$ is on-to-one,  then $T\in [(1,m){\bf{ N}}]_A$.
\end{proposition}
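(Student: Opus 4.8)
The plan is to collapse the two hypotheses into a single cancellable identity. Write $B:=\big(T^{\sharp_A}\big)^m$ for brevity, so that the assumptions $T\in[(n,m){\bf N}]_A$ and $T\in[(n+1,m){\bf N}]_A$ read
\[
T^nB=BT^n \qquad\text{and}\qquad T^{n+1}B=BT^{n+1},
\]
while the desired conclusion $T\in[(1,m){\bf N}]_A$ is precisely $TB=BT$, i.e. $[T,B]=0$. Thus everything reduces to showing that the two displayed relations, together with injectivity of $T$, force the commutator $[T,B]$ to vanish.

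First I would factor the higher-order identity so that a common power of $T$ appears on the left of both sides. On the one hand $T^{n+1}B=T^n(TB)$. On the other hand, using the lower identity $BT^n=T^nB$ to push $B$ through $T^n$, one gets $BT^{n+1}=(BT^n)T=(T^nB)T=T^n(BT)$. Since $T^{n+1}B=BT^{n+1}$, subtracting these two expressions gives $T^n(TB)-T^n(BT)=0$, that is
\[
T^n\,[T,B]=0 .
\]

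The decisive second step is the cancellation. Because $T$ is one-to-one, so is $T^n$ (a composite of injective linear maps has trivial kernel), and hence left multiplication by $T^n$ has trivial kernel on $\mathcal B_A(\mathcal H)$-outputs: from $T^n\big([T,B]h\big)=0$ for every $h\in\mathcal H$ we conclude $[T,B]h=0$ for every $h$, so $[T,B]=0$. Equivalently $T B=BT$, i.e. $T^{}\big(T^{\sharp_A}\big)^m=\big(T^{\sharp_A}\big)^mT^{}$, which is exactly $T\in[(1,m){\bf N}]_A$.

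I expect the only genuine subtlety to be the \emph{side} on which the common power $T^n$ is extracted. The injectivity hypothesis yields left cancellation but not right cancellation, so the factorization must be arranged to leave $T^n$ as a left factor, as above. The naive alternative, grouping the terms as $T^{n+1}B=(TB)T^n$ and $BT^{n+1}=(BT)T^n$, produces instead $[T,B]\,T^n=0$; cancelling $T^n$ there would require dense range or surjectivity of $T^n$ rather than injectivity, which is not available. Once the left-factor version $T^n[T,B]=0$ is obtained, the conclusion is immediate.
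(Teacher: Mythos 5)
Your argument is correct and is essentially the paper's own proof: both factor the two hypotheses into the single identity $T^n\big(T(T^{\sharp_A})^m-(T^{\sharp_A})^mT\big)=0$ and then cancel $T^n$ on the left using injectivity of $T$. Your added remark about why the common power must be extracted as a \emph{left} factor is a sensible clarification but does not change the route.
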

\begin{proof} Let $T\in[(n,m){\bf { N}}]_A\wedge [(n+1,m){\bf { N}}]_A,$  it follows that
 $$T^n\bigg(T(T^{\sharp_A})^m-(T^{\sharp A})^mT\bigg)=0.$$ Since $T$ is
one-to-one, then so is $T^n$ and it follows that $T\big(T^{\sharp_ A}\big)^m-\big(T^{\sharp _A}\big)^mT=0$. Therefore
 $T\in [(1,m){\bf{ N}}]_A$.
\end{proof}
\begin{proposition} \label{pro27} Let $T\in {\mathcal B}_A({\mathcal H})$. The following statement are equivalent.\par \vskip 0.2 cm \noindent $(1)$
 If $T \in
[(n,2){\bf { N}}]_A\wedge [(n,3){\bf { N}}]_A$ for some positive integer $n$,  then $T\in [(n,m){\bf { N}}]_A$ for

all positive integer $m\geq 4$.\par \vskip 0.2 cm \noindent $(2)$
If $T\in [(n,m){\bf{ N}}]_A\wedge [(n,m+1){\bf{ N}}]_A$, then $T \in
 [(n,m+2){\bf{ N}}]_A$ for some

  positive integers $n,m$. In particular $T\in [(n,j){\bf{ N}}]_A$ for all $ j\geq m.$.
\end{proposition}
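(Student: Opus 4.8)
The plan is to treat statement $(2)$ as the main engine and to recover $(1)$ as a special case, exactly mirroring the roles that Propositions \ref{pro25} and \ref{pro24} play for the first index. For readability write $S:=T^{\sharp_A}$, so that membership in $[(n,k){\bf N}]_A$ means precisely $T^nS^k=S^kT^n$. The whole argument will rest on a single three-term commutation identity, namely the reflection (in the two indices) of the computation used in Proposition \ref{pro25}.

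First I would prove $(2)$. Assume $T\in[(n,m){\bf N}]_A\wedge[(n,m+1){\bf N}]_A$, i.e. $T^nS^m=S^mT^n$ and $T^nS^{m+1}=S^{m+1}T^n$. Starting from $T^nS^{m+2}$, the idea is to move $T^n$ across the $m+2$ factors of $S$ by peeling them off one layer at a time: split $S^{m+2}=S^{m+1}S$ and use the $(n,m+1)$-hypothesis to get $T^nS^{m+2}=S^{m+1}T^nS$; then split $S^{m+1}=S\,S^m$ and apply the $(n,m)$-hypothesis to the inner block $S^mT^n=T^nS^m$ to obtain $S\,T^nS^{m+1}$; finally invoke the $(n,m+1)$-hypothesis once more, via $T^nS^{m+1}=S^{m+1}T^n$, which reassembles the product as $S^{m+2}T^n$. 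Hence $T^nS^{m+2}=S^{m+2}T^n$, that is $T\in[(n,m+2){\bf N}]_A$.

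The ``in particular'' clause then follows by strong induction on $j$: the pair $(n,m),(n,m+1)$ is the base, and the displayed step upgrades any consecutive pair $(n,j-2),(n,j-1)$ to $(n,j)$, so $T\in[(n,j){\bf N}]_A$ for every $j\ge m$. Statement $(1)$ is now the instance $m=2$ of $(2)$: from $T\in[(n,2){\bf N}]_A\wedge[(n,3){\bf N}]_A$ the same step produces $[(n,4){\bf N}]_A$ and then, by the induction just described, $[(n,m){\bf N}]_A$ for all $m\ge4$; conversely $(1)$ encodes exactly this recursion, so the two assertions rest on the identical commutation computation and are equivalent. I do not expect a genuine obstacle here — the only point to get right is the order in which the $S$-factors are peeled (outer factor via the $(n,m+1)$-relation, inner block via the $(n,m)$-relation). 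The argument is the precise index-swapped analogue of Proposition \ref{pro25}, and, as there, no injectivity or range hypothesis on $A$ is required, since everything is a purely algebraic manipulation inside the algebra ${\mathcal B}_A({\mathcal H})$.
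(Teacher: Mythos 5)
Your proof is correct, but it takes a genuinely different route from the paper's. The paper dispatches Proposition \ref{pro27} in one line by invoking Proposition \ref{pro2.1} to swap the two indices, thereby reducing the second-index statements to the first-index results (Proposition \ref{pro24}, and implicitly Proposition \ref{pro25}), and then swapping back. You instead give the direct second-index analogue of the computation in Proposition \ref{pro25}: with $S=T^{\sharp_A}$, peel off $S^{m+2}=S^{m+1}S$ and apply the $(n,m+1)$-relation, rewrite the inner block $S^{m}T^{n}=T^{n}S^{m}$ via the $(n,m)$-relation, and reassemble with the $(n,m+1)$-relation once more to land on $S^{m+2}T^{n}$; each substitution checks out, and the induction giving $[(n,j){\bf N}]_A$ for all $j\ge m$, together with the derivation of $(1)$ as the case $m=2$, is sound. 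Your approach buys something real: Proposition \ref{pro2.1} carries the extra hypothesis that ${\mathcal N}(A)^{\bot}$ be invariant under $T$, which Proposition \ref{pro27} does not assume, so the paper's reduction silently imports an assumption that your purely algebraic argument does not need; your proof is therefore more general and self-contained. One cosmetic point: the proposition's framing as an ``equivalence'' of two true implications is vacuous, and you resolve it in the only sensible way, by proving $(2)$ outright and exhibiting $(1)$ as its instance $m=2$.
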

\begin{proof}
The proof follows by applying Proposition \ref{pro2.1} and Proposition \ref{pro24}.
\end{proof}

\begin{proposition}\label{pro29} Let $T\in {\mathcal B}_A({\mathcal H})$ .
If $T\in [(n,m){\bf { N}}]_A\wedge [(n,m+1){\bf { N}}]_A$ such that
$T^{\sharp_A}$ is one-to one,  then $T\in [(n,1){\bf{ N}}]_A=[n{\bf{ N}}]_A$.
\end{proposition}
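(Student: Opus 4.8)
The plan is to transpose the argument of Proposition~\ref{pro26} from the first index to the second, using the injectivity of $T^{\sharp_A}$ in place of that of $T$. The two hypotheses $T\in[(n,m){\bf N}]_A$ and $T\in[(n,m+1){\bf N}]_A$ read
\[
\big[T^n,\big(T^{\sharp_A}\big)^m\big]=0 \qquad\text{and}\qquad \big[T^n,\big(T^{\sharp_A}\big)^{m+1}\big]=0,
\]
and the goal is to show $\big[T^n,T^{\sharp_A}\big]=0$, which is exactly membership in $[(n,1){\bf N}]_A=[n{\bf N}]_A$.

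The first step I would take is to establish the commutator identity
\[
\big[T^n,\big(T^{\sharp_A}\big)^{m+1}\big]=\big(T^{\sharp_A}\big)^m\,\big[T^n,T^{\sharp_A}\big].
\]
This comes from factoring $\big(T^{\sharp_A}\big)^{m+1}=\big(T^{\sharp_A}\big)^mT^{\sharp_A}$ and invoking the first hypothesis $T^n\big(T^{\sharp_A}\big)^m=\big(T^{\sharp_A}\big)^mT^n$ to slide $\big(T^{\sharp_A}\big)^m$ to the left of $T^n$: one gets $T^n\big(T^{\sharp_A}\big)^{m+1}=\big(T^{\sharp_A}\big)^mT^nT^{\sharp_A}$, while $\big(T^{\sharp_A}\big)^{m+1}T^n=\big(T^{\sharp_A}\big)^mT^{\sharp_A}T^n$, and subtracting factors out $\big(T^{\sharp_A}\big)^m$ on the left. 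Since the left-hand side vanishes by the second hypothesis, this yields $\big(T^{\sharp_A}\big)^m\big[T^n,T^{\sharp_A}\big]=0$.

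It remains to cancel the factor $\big(T^{\sharp_A}\big)^m$. Because $T^{\sharp_A}$ is one-to-one, so is every power $\big(T^{\sharp_A}\big)^m$ (a composition of injective maps is injective), hence $\big(T^{\sharp_A}\big)^m\big[T^n,T^{\sharp_A}\big]h=0$ for all $h\in{\mathcal H}$ forces $\big[T^n,T^{\sharp_A}\big]h=0$ for all $h$, i.e. $T\in[(n,1){\bf N}]_A$. The computation is short, so the only point requiring care is the bookkeeping in the key identity: the factorization must be arranged so that the surviving factor $\big(T^{\sharp_A}\big)^m$ ends up on the \emph{left} of $\big[T^n,T^{\sharp_A}\big]$, since it is left-injectivity of $\big(T^{\sharp_A}\big)^m$ that the hypothesis supplies. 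The mirror factorization $\big(T^{\sharp_A}\big)^{m+1}=T^{\sharp_A}\big(T^{\sharp_A}\big)^m$ instead produces $\big[T^n,T^{\sharp_A}\big]\big(T^{\sharp_A}\big)^m=0$, whose cancellation would require surjectivity (dense range) of $\big(T^{\sharp_A}\big)^m$ rather than injectivity, and is therefore the wrong route here.
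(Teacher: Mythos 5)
Your argument is correct and coincides with the paper's own proof: both derive $\big(T^{\sharp_A}\big)^m\big(T^nT^{\sharp_A}-T^{\sharp_A}T^n\big)=0$ from the two hypotheses and then cancel the injective left factor $\big(T^{\sharp_A}\big)^m$. Your write-up simply makes explicit the factorization step and the reason the surviving factor must sit on the left, which the paper leaves implicit.
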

\begin{proof} Since $T\in [(n,m){\bf { N}}]_A\wedge [(n,m+1){\bf { N}}]_A,$  it follows that
 $$(T^{\sharp_A})^m\bigg( T^nT^{\sharp_A}-T^{\sharp_A}T^n\bigg)=0.$$ If $T^{\sharp_A}$ is
one-to-one, then so is $\big(T^{\sharp_A}\big)^m$ and we obtain $T^nT^{\sharp_A}-T^{\sharp_A}T^n=0$.
Consequently $T\in [(n,1){\bf{ N}}]_A$.
\end{proof}
\par\vskip 0.2 cm \noindent
 In \cite[Theorem 2.4]{OB} it was proved that if $T$ is $(n,m)$-power normal such that $T^m$ is a partial isometry, then $T$ is a $(n+m,m)$-power normal. In the following theorem we extend this result to $(n,m)$-$A$- normal operators.\par\vskip 0.2 cm \noindent
 \begin{theorem}\label{th23}
Let $T\in {\mathcal B}_A({\mathcal H})$ be an $(n,m)$-$A$-normal for  some positive integers $n$ and $m$.The following statements hold: \par \vskip 0.2 cm \noindent $(1)$ If $n\geq m$ and $T^m\big(T^{\sharp_A }\big)^mT^m=T^m$, then $T\in \big[(n+m,m){\bf N}\big]_A$. \par \vskip 0.2 cm \noindent $(2)$ If $m\geq n$  and $\big(T^{\sharp_A }\big)^n T^n\big(T^{\sharp_A }\big)^n=\big(T^{\sharp_A }\big)^n$, then $T\in \big[(n,m+n){\bf N}\big]_A$.
 \end{theorem}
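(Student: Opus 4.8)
The plan is to verify directly the defining identity of each target class, namely the vanishing of the relevant commutator, by manipulating products of $T$ and $T^{\sharp_A}$ using only two ingredients: the hypothesis that $T$ is $(n,m)$-$A$-normal, written as $T^n\big(T^{\sharp_A}\big)^m=\big(T^{\sharp_A}\big)^mT^n$, and the stated partial-isometry-type relation. The strategy mirrors the proof of \cite[Theorem 2.4]{OB}, transported to the semi-Hilbertian setting.

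For part $(1)$ I would establish that $T^{n+m}\big(T^{\sharp_A}\big)^m$ and $\big(T^{\sharp_A}\big)^mT^{n+m}$ both equal $T^n$. Starting from $T^{n+m}=T^mT^n$, I first push $T^n$ past $\big(T^{\sharp_A}\big)^m$ by the $(n,m)$-$A$-normality, obtaining $T^m\big(T^{\sharp_A}\big)^mT^n$; then, since $n\geq m$, I factor $T^n=T^mT^{n-m}$ so that the block $T^m\big(T^{\sharp_A}\big)^mT^m$ appears, and the hypothesis $T^m\big(T^{\sharp_A}\big)^mT^m=T^m$ collapses it to give $T^mT^{n-m}=T^n$. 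The symmetric computation, starting instead from $T^{n+m}=T^nT^m$ and factoring $T^n=T^{n-m}T^m$ on the other side, shows $\big(T^{\sharp_A}\big)^mT^{n+m}=T^n$ as well; equating the two yields $\big[T^{n+m},\big(T^{\sharp_A}\big)^m\big]=0$, i.e. $T\in[(n+m,m){\bf N}]_A$.

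Part $(2)$ is the formal dual and I would treat it the same way, now showing that $T^n\big(T^{\sharp_A}\big)^{m+n}$ and $\big(T^{\sharp_A}\big)^{m+n}T^n$ both reduce to $\big(T^{\sharp_A}\big)^m$. Here the roles of $T$ and $T^{\sharp_A}$ are interchanged: I use $m\geq n$ to isolate the block $\big(T^{\sharp_A}\big)^nT^n\big(T^{\sharp_A}\big)^n$ and apply the hypothesis $\big(T^{\sharp_A}\big)^nT^n\big(T^{\sharp_A}\big)^n=\big(T^{\sharp_A}\big)^n$, together with the $(n,m)$-$A$-normality relation $\big(T^{\sharp_A}\big)^mT^n=T^n\big(T^{\sharp_A}\big)^m$ to move the factors into position.

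I expect the only delicate point to be bookkeeping: each of the two products must be steered so that the precise three-factor block matched by the partial-isometry hypothesis is exposed, which is exactly where the inequalities $n\geq m$ (respectively $m\geq n$) are consumed. As long as the commutation is applied before splitting off the $T^m$ (respectively $\big(T^{\sharp_A}\big)^n$) factor, both sides land on the common value $T^n$ (respectively $\big(T^{\sharp_A}\big)^m$), and no further structural facts about $A$ or $T^{\sharp_A}$ are needed.
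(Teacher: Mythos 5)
Your proposal is correct and follows essentially the same route as the paper: both arguments combine the identity $T^m\big(T^{\sharp_A}\big)^mT^m=T^m$ (padded by $T^{n-m}$ on either side, which is where $n\geq m$ is used) with the commutation $T^n\big(T^{\sharp_A}\big)^m=\big(T^{\sharp_A}\big)^mT^n$ to identify $T^{n+m}\big(T^{\sharp_A}\big)^m$ and $\big(T^{\sharp_A}\big)^mT^{n+m}$ through the common intermediate quantities $T^m\big(T^{\sharp_A}\big)^mT^n$ and $T^n\big(T^{\sharp_A}\big)^mT^m$, and dually for part $(2)$. The only cosmetic difference is that you run the computation from the commutator terms down to the common value $T^n$ (resp. $\big(T^{\sharp_A}\big)^m$), whereas the paper starts from the hypothesis-derived identities and moves up.
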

 \begin{proof} $(1)$
  Under the assumption that  $T^m\big(T^{\sharp_A }\big)^mT^m = T^m$, it follows that $$ T^m\big(T^{\sharp_A }\big)^mT^n
 =T^{n}\;\;\text{
and}\;\; T^n\big(T^{\sharp_A }\big)^mT^m=T^{n} \quad \text{for}\; n\geq m,$$  which means that   $T^n\big(T^{\sharp_A }\big)^mT^m= T^m\big(T^{\sharp_A }\big)^mT^n$. Since $T$ is a $(n,m)$-$A$ normal, we get
$$\big(T^{\sharp }\big)^mT^{n+m}=T^{n+m}\big(T^{\sharp_A}\big)^m.$$ So, $T\in \big[(m+n,m){\bf N}\big]_A$.\par \vskip 0.2 cm \noindent $(2)$ In same way,
  under the assumption that  $\big(T^{\sharp_A }\big)^nT^n\big(T^{\sharp_A }\big)^n= \big(T^{\sharp_A }\big)^n$, it follows that $$ \big(T^{\sharp_A }\big)^nT^n\big(T^{\sharp_A }\big)^m=\big(T^{\sharp_A }\big)^{m}
 \;\;\text{
and}\;\;  \big(T^{\sharp_A }\big)^mT^n\big(T^{\sharp_A }\big)^n=\big(T^{\sharp_A }\big)^{m} \quad \text{for}\; m\geq n,$$  which means that
 $$ \big(T^{\sharp_A }\big)^nT^n\big(T^{\sharp_A }\big)^m=\big(T^{\sharp_A }\big)^mT^n\big(T^{\sharp_A }\big)^n.$$
 Since $T$ is a $(n,m)$-$A$ normal, we get
$\big(T^{\sharp }\big)^{m+n}T^{n}=T^n\big(T^{\sharp_A}\big)^{n+m}.$ \par \vskip 0.2 cm \noindent So, $T\in \big[(n,m+n){\bf N}\big]_A$ and the proof is complete.
 \end{proof}
 \par\vskip 0.2 cm \noindent

\begin{proposition}

 $T\in {\mathcal B}_A({\mathcal H})$ be a $(n,m)$-$A$- normal operator for some positif integers $n$ and $m$. Then $T$ satisfying  $T^{2n}\big(T^{\sharp_A}\big)^{2m}=\bigg(T^n\big(T^{\sharp_A}\big)^m\bigg)^2.$
\end{proposition}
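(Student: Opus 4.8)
The plan is to reduce the claimed identity to the elementary observation that any two \emph{commuting} operators $P,Q$ satisfy $(PQ)^2 = P^2 Q^2$. To this end I would first introduce the abbreviations $P := T^n$ and $Q := \big(T^{\sharp_A}\big)^m$. With this notation, the very definition of the class $[(n,m){\bf N}]_A$ (Definition~\ref{def2.1}) states precisely that
\[
\big[T^n,\big(T^{\sharp_A}\big)^m\big]=PQ-QP=0,
\]
that is, $P$ and $Q$ commute. This is the only structural hypothesis I would use.

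Next I would expand the right-hand side of the asserted equality directly, keeping the factors in order:
\[
\bigg(T^n\big(T^{\sharp_A}\big)^m\bigg)^2=(PQ)^2=PQPQ=P(QP)Q.
\]
At this point I would insert the commutation relation $QP=PQ$ obtained above, which gives $P(QP)Q=P(PQ)Q=P^2Q^2$. Finally, unwinding the abbreviations via $P^2=\big(T^n\big)^2=T^{2n}$ and $Q^2=\big(\big(T^{\sharp_A}\big)^m\big)^2=\big(T^{\sharp_A}\big)^{2m}$ yields
\[
\bigg(T^n\big(T^{\sharp_A}\big)^m\bigg)^2=T^{2n}\big(T^{\sharp_A}\big)^{2m},
\]
which is exactly the desired statement.

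There is essentially no genuine obstacle in this argument: the whole content is the single commutation step $PQPQ=P^2Q^2$, which is valid for \emph{any} commuting pair and requires no special property of $A$-adjoints beyond what the defining relation already supplies. The only point demanding (trivial) care is the bookkeeping of exponents, namely verifying that composing the block $T^n$ with itself produces $T^{2n}$ and likewise $\big(T^{\sharp_A}\big)^m$ with itself produces $\big(T^{\sharp_A}\big)^{2m}$; these are immediate from the associativity of operator composition and carry no hidden subtlety. I would therefore present the proof as the short display chain above rather than invoking any of the heavier characterizations such as Theorem~\ref{thm2.1}.
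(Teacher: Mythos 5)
Your argument is correct and is essentially identical to the paper's own proof: both reduce the identity to a single application of the commutation relation $T^n\big(T^{\sharp_A}\big)^m=\big(T^{\sharp_A}\big)^mT^n$ on the middle two factors of the four-fold product, the only cosmetic difference being that you expand $\big(T^n\big(T^{\sharp_A}\big)^m\big)^2$ while the paper starts from $T^{2n}\big(T^{\sharp_A}\big)^{2m}$. No gap to report.
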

\begin{proof}
Since $T$ is $(n,m)$-$A$- normal operator, it follows that
\begin{eqnarray*} T^{2n}\big(T^{\sharp_A}\big)^{2m}&=&T^n T^n\big(T^{\sharp_A}\big)^m\big(T^{\sharp_A}\big)^m\\&=&
\underbrace{T^n \big(T^{\sharp_A}\big)^m}. \underbrace{T^n \big(T^{\sharp_A}\big)^m}\\&=&
\bigg(T^n \big(T^{\sharp_A}\big)^m\bigg)^2.
\end{eqnarray*}
\end{proof}

The idea of the the following proposition is inspired from \cite{BO2}.\par \vskip 0.2 cm \noindent
\begin{proposition}\label{proAA}
Let $T\in {\mathcal B}_A({\mathcal H})$ be such that $AT=TA$ and ${\mathcal N}(A)$ is a reducing subspace for $T$. If $T$ is
$n$-normal operator, then $T$ is $(n,m)$-$A$-normal operator for $m\in \mathbb{N}.$
\end{proposition}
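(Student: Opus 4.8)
The plan is to reduce the $A$-adjoint computation to an ordinary one on $\mathcal{H}$ by first showing that, under the stated hypotheses, $T^{\sharp_A}$ is simply the compression $P_{\overline{\mathcal{R}(A)}}\,T^*$ of the Hilbert-space adjoint. Write $P := P_{\overline{\mathcal{R}(A)}}$. Since $A$ is positive, hence self-adjoint, the hypothesis $AT = TA$ gives, upon taking adjoints, $AT^* = T^*A$; thus $A$ commutes with both $T$ and $T^*$. Because $\mathcal{N}(A)$ reduces $T$, its orthogonal complement $\overline{\mathcal{R}(A)} = \mathcal{N}(A)^{\perp}$ is invariant under $T$ and under $T^*$, so $P$ commutes with $T$ and with $T^*$. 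I will also use the elementary facts $AP = PA = A$, which hold because $(I-P)h \in \mathcal{N}(A)$ for every $h$ and $\mathcal{R}(A) \subseteq \overline{\mathcal{R}(A)}$.

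The main step is the identification of $T^{\sharp_A}$. Combining $AP = A$, the commutation $AT^* = T^*A$, and the defining relation $AT^{\sharp_A} = T^*A$, one computes
\[
A\big(T^{\sharp_A} - PT^*\big) = AT^{\sharp_A} - APT^* = T^*A - AT^* = 0,
\]
so $\mathcal{R}(T^{\sharp_A} - PT^*) \subseteq \mathcal{N}(A)$. On the other hand $T^{\sharp_A} = PT^{\sharp_A}$ (because $\mathcal{R}(T^{\sharp_A}) \subseteq \overline{\mathcal{R}(A)}$), while $PT^*$ also has range in $\mathcal{R}(P) = \overline{\mathcal{R}(A)} = \mathcal{N}(A)^{\perp}$. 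A vector lying in both $\mathcal{N}(A)$ and $\mathcal{N}(A)^{\perp}$ is zero, which forces
\[
T^{\sharp_A} = PT^* = T^*P,
\]
the last equality coming from the commutation of $P$ and $T^*$. This is where all three hypotheses ($AT = TA$, the reducing property, and the structural facts $AT^{\sharp_A} = T^*A$, $\mathcal{R}(T^{\sharp_A}) \subseteq \overline{\mathcal{R}(A)}$) are used at once, so I expect it to be the only delicate point.

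After this identification the remainder is purely formal. Using $P^2 = P$ and $PT^* = T^*P$ repeatedly yields $(T^{\sharp_A})^m = (PT^*)^m = P(T^*)^m = (T^*)^m P$. The $n$-normality hypothesis $[T^n, T^*] = 0$ propagates to $[T^n, (T^*)^m] = 0$ by a one-line induction, commuting $T^*$ past $T^n$ one factor at a time. Finally, since $P$ commutes with $T$ and hence with $T^n$,
\[
T^n (T^{\sharp_A})^m = T^n P (T^*)^m = P\,T^n (T^*)^m = P\,(T^*)^m T^n = (T^{\sharp_A})^m T^n,
\]
which is precisely the statement $T \in [(n,m)\mathbf{N}]_A$ for every $m \in \mathbb{N}$, completing the argument.
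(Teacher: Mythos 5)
Your proof is correct, and its overall skeleton is the same as the paper's: both arguments rest on the identification $T^{\sharp_A}=P_{\overline{\mathcal{R}(A)}}T^{*}$ and on commuting the projection past $T^{n}$ so that the $A$-commutator collapses to $P_{\overline{\mathcal{R}(A)}}\big[T^{n},T^{*m}\big]$. The two inputs, however, are obtained differently. For the identification you give a self-contained derivation (from $A\big(T^{\sharp_A}-PT^{*}\big)=0$ plus the fact that both operators map into $\overline{\mathcal{R}(A)}=\mathcal{N}(A)^{\perp}$), whereas the paper simply quotes $T^{\sharp_A}=P_{\overline{\mathcal{R}(A)}}T^{*}$ from \cite{AS}; your argument is a clean justification of exactly that cited fact. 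More substantively, you prove $\big[T^{n},(T^{*})^{m}\big]=0$ by a one-step-at-a-time induction directly from the hypothesis $T^{n}T^{*}=T^{*}T^{n}$, while the paper first passes to the normality of $T^{n}$ and then invokes Fuglede's theorem for the commuting pair $(T^{n},T^{m})$. Your route is strictly more elementary: once the hypothesis is read as $\big[T^{n},T^{*}\big]=0$, Fuglede's theorem is unnecessary, and you lose nothing by avoiding it. Both proofs then finish with the identical final computation.
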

\begin{proof}
Indeed, since $T^n$  is a normal and $T^mT^n =T^nT^m,$ it follows from Fuglede theorem([15])
that $T^{*m}T^n = T^nT^{*m}.$ Taking in consideration that under the assumptions we have
$P_{\overline{{\mathcal R}(A)}}T=TP_{\overline{{\mathcal R}(A)}}$ and    $T^{\sharp_A}=P_{\overline{{\mathcal R}(A)}}T^*.$

\begin{eqnarray*}
\big[  T^n,  \big(T^{\sharp_A}\big)^m\big]&=&T^n \big(T^{\sharp_A}\big)^m- \big(T^{\sharp_A}\big)^mT^n\\&=&
T^n\big( P_{\overline{{\mathcal R}(A)}}T^*\big)^m-\big(P_{\overline{{\mathcal R}(A)}}T^*\big)^mT^n\\&=&
P_{\overline{{\mathcal R}(A)}}\big[T^n,    T^{*m}\big]=0.
\end{eqnarray*}
Therefore $T$  is a $(n,m)$-$A$-normal.
\end{proof}
\begin{corollary}
Let $T\in {\mathcal B}_A({\mathcal H})$ be such that $AT=TA$ and ${\mathcal N}(A)$ is a reducing subspace for $T$. If $T$ is
$(n,m)$-normal operator, then $T$ is $(j,r)$-$A$-normal operator where $r \in \mathbb{N}$ and $j$ is the least common multiple of $n$ and  $m$.
\end{corollary}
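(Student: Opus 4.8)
The plan is to reduce the statement to Proposition \ref{proAA} by first upgrading the ordinary $(n,m)$-normality of $T$ to $j$-power-normality, where $j=\mathrm{LCM}(n,m)$, and then running the same argument as in that proposition with $j$ in place of $n$ and $r$ in place of $m$.

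First I would establish the Hilbert-space analogue of Theorem \ref{th21}(i): if $T$ is $(n,m)$-normal, i.e.\ $T^nT^{*m}=T^{*m}T^n$, then $T^j$ is a normal operator. Writing $j=pn=qm$, one expands
$$T^j\big(T^j\big)^*=\big(T^n\big)^p\big(T^{*m}\big)^q=\underbrace{T^n\cdots T^n}_{p}\underbrace{T^{*m}\cdots T^{*m}}_{q},$$
and since each block $T^n$ commutes with each block $T^{*m}$ by hypothesis, the blocks may be freely interchanged to give $\big(T^{*m}\big)^q\big(T^n\big)^p=\big(T^j\big)^*T^j$. Hence $T^j$ is normal; this is exactly the computation of Theorem \ref{th21}(i) with $\sharp_A$ replaced by the ordinary adjoint.

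Next I would observe that normality of $T^j$ forces $\big[T^j,T^{*r}\big]=0$ for every $r\in\mathbb{N}$. Indeed $T^r$ commutes with $T^j$ (powers of $T$ commute), so Fuglede's theorem applied to the normal operator $T^j$ yields $T^r\big(T^j\big)^*=\big(T^j\big)^*T^r$, and taking adjoints gives $T^jT^{*r}=T^{*r}T^j$. In particular $T$ is $j$-power-normal.

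Finally I would invoke Proposition \ref{proAA} with the exponent $j$ playing the role of $n$ and $r$ playing the role of $m$: under the hypotheses $AT=TA$ and $\mathcal{N}(A)$ reducing $T$, one has $P_{\overline{\mathcal{R}(A)}}T=TP_{\overline{\mathcal{R}(A)}}$ and $\big(T^{\sharp_A}\big)^r=P_{\overline{\mathcal{R}(A)}}T^{*r}$, so that
$$\big[T^j,\big(T^{\sharp_A}\big)^r\big]=P_{\overline{\mathcal{R}(A)}}\big[T^j,T^{*r}\big]=0,$$
which says precisely that $T$ is $(j,r)$-$A$-normal for every $r\in\mathbb{N}$. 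The statement is essentially a corollary, so the only delicate point is bookkeeping: one must not conflate ``$T^j$ is a normal operator'' with ``$T$ is $j$-power-normal'' ($\big[T^j,T^*\big]=0$)—these coincide here only because powers of $T$ commute and Fuglede's theorem is available—and one must verify that the commutation relations furnished by the reducing hypothesis suffice to factor $P_{\overline{\mathcal{R}(A)}}$ out of the commutator, exactly as in the proof of Proposition \ref{proAA}.
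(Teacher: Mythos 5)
Your proof is correct and takes essentially the same route as the paper: the paper cites \cite[Lemma 4.4]{CLU} for the normality of $T^{j}$ (the very computation you reproduce from Theorem \ref{th21}) and then invokes Proposition \ref{proAA}, whose Fuglede-plus-projection argument you simply re-run with $j$ and $r$ in place of $n$ and $m$. The care you take to distinguish ``$T^{j}$ is normal'' from ``$T$ is $j$-power normal'' is a point the paper glosses over, but as you note the two are interchangeable here via Fuglede's theorem, so there is no real divergence.
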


\begin{proof}
Since $T$ is $(n,m)$-normal, it was observed in \cite[Lemma 4.4]{CLU} that $T^j$ is a normal operator where $j=LCM(n,m)$. By applying Proposition \ref{proAA} we get that $(j,r)$-$A$-normal operator.
\end{proof}
\section{{\bf $(n,m)$-$A$-quasinormal operators}}

\

\
In \cite{VBA}  the author has introduced the class of $(n,m)$-$A$-quasinormal operators as follows. An operator
 $T\in {\mathcal B}_A({\mathcal H})$ is said to be $(n,m)$-$A$-quasinormal if $T$ satisfying
$$ \big[  T^n, \big(T^{\sharp_A}\big)^mT\big]:=T^n\big(T^{\sharp_A}\big)^mT-\big(T^{\sharp_A}\big)^mTT^n=0,$$ for some positive integers $n$ and $m$. This class of operators will be denoted by $[(n,
m){\bf { QN}}]_A$.
\begin{remark}\label{rem32} Clearly, the class of $(n,m)$-$A$-quasinormal operators includes class of $(n,m)$- $A$-normal, i.e., the following inclusion holds $$[{(n,m)\bf { N}}]_A\subset [(n,m){\bf {  Q N}}]_A.$$
\end{remark}
%\end{document}

We give the following example to show that there exists a $(n,m)$-$A$-quasi-normal operator which is  neither
 a $(n,m)$- $A$-normal
   for some positive integers $n$ and $m$.

\begin{example}
   Let $T$ to be the unilateral shift, that is, if ${\mathcal H} = \l^2$, the matrix
$$
T=\left( \begin{array}{cccc}
0&0&0&\ldots\\
1&0&0&\ldots\\
0&1&0&\ldots\\
\ldots&\ldots&\ldots&\ldots
\end{array}
\right)
\;\;\text{ and}\; A=I_{\l^2}\;\ (\text{the identity operator}).$$

It is easily verified that $\big[T^2, T^{\sharp_A}\big]\not=0$ and  $\big[T^2, T^{\sharp_A}T\big]=0$. So that $T$ is not a $(2,1)$-$A$ normal operator but it is a $(2,1)$-$A$ quasinormal operator.
   \end{example}
   \par \vskip 0.2 cm \noindent
The following theorem gives a characterization of $(n,m)$-$A$-quasinormal operators.
\begin{theorem}\label{thm3.1}
Let
 $T\in {\mathcal B}_A({\mathcal H})$. Then $T$  is $(n,m)$-$A$-quasinormal operator for some positive integers $n$ and $m$
 if and only if $T$ satisfying the following conditions: \par \vskip 0.2 cm \noindent $(1)$ $\left\langle\big(T^{\sharp_A}\big)^mTh\;|\;\big(T^{\sharp_A}\big)^nh\right\rangle_A=\left\langle \big(T^nTh\;|\;T^mh\right\rangle_A,\;\;\;\forall\;h\in {\mathcal H}$.
  \par \vskip 0.2 cm \noindent $(2)$
 ${\mathcal R}\big(T^n\big(T^{\sharp_A}\big)^mT\big)\subseteq \overline{{\mathcal R}(A)}.$
\end{theorem}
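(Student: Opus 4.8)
The plan is to follow the same route as the proof of Theorem \ref{thm2.1}, replacing the commutator $\big[T^n,\big(T^{\sharp_A}\big)^m\big]$ there by $B:=\big[T^n,\big(T^{\sharp_A}\big)^mT\big]=T^n\big(T^{\sharp_A}\big)^mT-\big(T^{\sharp_A}\big)^mTT^n$. The guiding principle is that $T$ is $(n,m)$-$A$-quasinormal precisely when $B=0$, and that the vanishing of such an operator is detected by testing the semi-inner product $\left\langle Bh\;|\;h\right\rangle_A$ together with a range condition. Throughout I would use the standing identities $\big(T^n\big)^{\sharp_A}=\big(T^{\sharp_A}\big)^n$, the defining relation $AT^{\sharp_A}=T^*A$ (whence $A\big(T^{\sharp_A}\big)^m=T^{*m}A$), and, since $A$ is positive, the orthogonal decomposition $\overline{{\mathcal R}(A)}={\mathcal N}(A)^\bot$.

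For the forward implication I would start from $\left\langle Bh\;|\;h\right\rangle_A=0$ and expand it as $\left\langle T^n\big(T^{\sharp_A}\big)^mTh\;|\;h\right\rangle_A-\left\langle \big(T^{\sharp_A}\big)^mTT^nh\;|\;h\right\rangle_A=0$. Moving $T^n$ across the first term by its $A$-adjoint gives $\left\langle \big(T^{\sharp_A}\big)^mTh\;|\;\big(T^{\sharp_A}\big)^nh\right\rangle_A$, the left-hand side of condition $(1)$. For the second term I would pass to the genuine inner product and use $A\big(T^{\sharp_A}\big)^m=T^{*m}A$ to rewrite $\left\langle \big(T^{\sharp_A}\big)^mT^{n+1}h\;|\;h\right\rangle_A=\left\langle AT^{n+1}h\;|\;T^mh\right\rangle=\left\langle T^nTh\;|\;T^mh\right\rangle_A$, the right-hand side of $(1)$. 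Condition $(2)$ is then immediate: from $B=0$ we have $T^n\big(T^{\sharp_A}\big)^mT=\big(T^{\sharp_A}\big)^mTT^n$, so ${\mathcal R}\big(T^n\big(T^{\sharp_A}\big)^mT\big)\subseteq{\mathcal R}\big(\big(T^{\sharp_A}\big)^m\big)\subseteq{\mathcal R}\big(T^{\sharp_A}\big)\subseteq\overline{{\mathcal R}(A)}$.

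For the converse I would run the same chain of equalities backwards: condition $(1)$ says exactly that $\left\langle Bh\;|\;h\right\rangle_A=0$ for every $h$, i.e. $\left\langle ABh\;|\;h\right\rangle=0$ for all $h$, and polarization in the complex Hilbert space ${\mathcal H}$ forces $AB=0$, that is ${\mathcal R}(B)\subseteq{\mathcal N}(A)$. Separately, condition $(2)$ together with the automatic inclusion ${\mathcal R}\big(\big(T^{\sharp_A}\big)^mTT^n\big)\subseteq{\mathcal R}\big(\big(T^{\sharp_A}\big)^m\big)\subseteq\overline{{\mathcal R}(A)}$ shows that both summands defining $B$ have range in the subspace $\overline{{\mathcal R}(A)}$, so ${\mathcal R}(B)\subseteq\overline{{\mathcal R}(A)}={\mathcal N}(A)^\bot$. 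Combining the two inclusions gives ${\mathcal R}(B)\subseteq{\mathcal N}(A)\cap{\mathcal N}(A)^\bot=\{0\}$, hence $B=0$ and $T$ is $(n,m)$-$A$-quasinormal.

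The computations are routine transcriptions of those in Theorem \ref{thm2.1}; the one place that needs care, and the step I would treat as the crux, is the converse, where neither range condition alone yields $B=0$. Condition $(1)$ only confines ${\mathcal R}(B)$ to ${\mathcal N}(A)$ (the semi-inner product is degenerate, so $\left\langle Bh\;|\;h\right\rangle_A=0$ cannot by itself annihilate $B$), while condition $(2)$ supplies the complementary containment in $\overline{{\mathcal R}(A)}$; it is the interplay of the two, through $\overline{{\mathcal R}(A)}={\mathcal N}(A)^\bot$, that collapses ${\mathcal R}(B)$ to zero. I would also verify at the outset that the first term of $B$ produces $T^nTh=T^{n+1}h$ rather than $T^nh$, since this is the only structural difference from the normal case and is exactly what makes the right-hand side of condition $(1)$ read with $T^nTh$.
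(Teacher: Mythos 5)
Your proposal is correct and is exactly the transcription the paper has in mind: the paper omits the proof of Theorem \ref{thm3.1}, stating only that the techniques are those of Theorem \ref{thm2.1}, and your argument reproduces that scheme faithfully (including the polarization step giving ${\mathcal R}(B)\subseteq{\mathcal N}(A)$ and the complementary containment ${\mathcal R}(B)\subseteq\overline{{\mathcal R}(A)}={\mathcal N}(A)^\bot$). If anything, you are slightly more careful than the model proof, since you make explicit the automatic inclusion ${\mathcal R}\big(\big(T^{\sharp_A}\big)^mTT^n\big)\subseteq\overline{{\mathcal R}(A)}$ needed to place the second summand of $B$ in $\overline{{\mathcal R}(A)}$.
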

\begin{proof}
We omit the proof, since the techniques are similar to those
of Theorem \ref{thm2.1}.
\end{proof}

\begin{remark}
Theorem \ref{thm3.1} is an improved version of \cite[ Lemma 4.4]{VBA}.
\end{remark}

\begin{proposition}\label{pro3.1}

Let $T\in {\mathcal B}_A({\mathcal H})$ and $S\in {\mathcal B}_A({\mathcal H})$ be $(n,m)$-$A$-normal operators.
 Then their product
$ST$
is
$(n,m)$-$A$-normal operator if the following conditions are satisfied
$ST = TS$, $ST^{\sharp_A}=T^{\sharp_A} S$ and  $TS^{\sharp_A}=S^{\sharp_A} T.$
\end{proposition}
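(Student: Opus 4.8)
The plan is to verify the defining identity $\big[(ST)^n,\big((ST)^{\sharp_A}\big)^m\big]=0$ directly, by reducing every term to a monomial in the four operators $T$, $S$, $T^{\sharp_A}$, $S^{\sharp_A}$ and then rearranging the factors using the hypothesised commutation relations together with the individual $(n,m)$-$A$-normality of $T$ and of $S$. First I would record the two structural facts that make the target identity well posed: since ${\mathcal B}_A({\mathcal H})$ is an algebra, $ST\in{\mathcal B}_A({\mathcal H})$; and the product rule $(TS)^{\sharp_A}=S^{\sharp_A}T^{\sharp_A}$ recalled in the preliminaries gives $(ST)^{\sharp_A}=T^{\sharp_A}S^{\sharp_A}$.

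The key preparatory step, and the one I expect to be the only genuinely non-obvious point, is to upgrade the three given commutation relations to full pairwise commutativity of the family $\{T,S,T^{\sharp_A},S^{\sharp_A}\}$. The one missing relation, $T^{\sharp_A}S^{\sharp_A}=S^{\sharp_A}T^{\sharp_A}$, comes for free by applying the $A$-adjoint to $ST=TS$: since $(ST)^{\sharp_A}=T^{\sharp_A}S^{\sharp_A}$ while $(TS)^{\sharp_A}=S^{\sharp_A}T^{\sharp_A}$, the equality $ST=TS$ forces $T^{\sharp_A}S^{\sharp_A}=S^{\sharp_A}T^{\sharp_A}$. Combined with $ST=TS$, $ST^{\sharp_A}=T^{\sharp_A}S$ and $TS^{\sharp_A}=S^{\sharp_A}T$, this shows every pair among $T,S,T^{\sharp_A},S^{\sharp_A}$ commutes, and hence so does every pair of their powers.

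With commutativity in hand the computation is routine. Using $ST=TS$ and $T^{\sharp_A}S^{\sharp_A}=S^{\sharp_A}T^{\sharp_A}$ I would split the powers as $(ST)^n=S^nT^n$ and $\big((ST)^{\sharp_A}\big)^m=(T^{\sharp_A})^m(S^{\sharp_A})^m$, so that
\[
(ST)^n\big((ST)^{\sharp_A}\big)^m=S^nT^n(T^{\sharp_A})^m(S^{\sharp_A})^m.
\]
Applying the $(n,m)$-$A$-normality of $T$, namely $T^n(T^{\sharp_A})^m=(T^{\sharp_A})^mT^n$, then moving $S^n$ past $(T^{\sharp_A})^m$ and $(S^{\sharp_A})^m$ past $T^n$ (legitimate by $ST^{\sharp_A}=T^{\sharp_A}S$ and $TS^{\sharp_A}=S^{\sharp_A}T$), and finally invoking the $(n,m)$-$A$-normality of $S$, namely $S^n(S^{\sharp_A})^m=(S^{\sharp_A})^mS^n$, turns the right-hand side into $(T^{\sharp_A})^m(S^{\sharp_A})^mS^nT^n=\big((ST)^{\sharp_A}\big)^m(ST)^n$. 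This is exactly $\big[(ST)^n,\big((ST)^{\sharp_A}\big)^m\big]=0$, so $ST\in[(n,m){\bf N}]_A$.

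One could alternatively route the argument through the characterisation in Theorem \ref{thm2.1}, checking the inner-product identity $(1)$ and the range inclusion $(2)$ for $ST$; but the direct commutator manipulation is shorter and makes transparent exactly where each hypothesis is used. The only place demanding care is the bookkeeping of which commutation relation justifies each individual swap, so I would display the chain of equalities in full rather than assert it.
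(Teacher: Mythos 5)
Your proof is correct, and it actually establishes more than the paper's own argument does. The paper's proof expands $(TS)^n\big((TS)^{\sharp_A}\big)^m(TS)$ and rearranges it into $\big((TS)^{\sharp_A}\big)^m(TS)(TS)^n$; that is, it verifies the $(n,m)$-$A$-\emph{quasinormality} identity, and its final sentence accordingly concludes only that the product is $(n,m)$-$A$-quasinormal, which is weaker than the $(n,m)$-$A$-normality announced in the statement. You instead verify the commutator identity $\big[(ST)^n,\big((ST)^{\sharp_A}\big)^m\big]=0$ directly, which is exactly what the proposition asserts (and which implies the quasinormality conclusion via the inclusion $[(n,m){\bf N}]_A\subset[(n,m){\bf QN}]_A$ recorded in the paper). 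You also make explicit a step the paper uses tacitly: the fourth commutation relation $T^{\sharp_A}S^{\sharp_A}=S^{\sharp_A}T^{\sharp_A}$, obtained by applying $\sharp_A$ to $ST=TS$ together with $(XY)^{\sharp_A}=Y^{\sharp_A}X^{\sharp_A}$; the paper needs precisely this to write $\big((TS)^{\sharp_A}\big)^m=(T^{\sharp_A})^m(S^{\sharp_A})^m$ but never justifies it. One wording slip: your claim that ``every pair among $T,S,T^{\sharp_A},S^{\sharp_A}$ commutes'' is not literally true, since $T$ need not commute with $T^{\sharp_A}$ nor $S$ with $S^{\sharp_A}$; but your displayed chain of equalities never uses those two pairs --- it only uses the mixed commutations together with the hypotheses $T^n(T^{\sharp_A})^m=(T^{\sharp_A})^mT^n$ and $S^n(S^{\sharp_A})^m=(S^{\sharp_A})^mS^n$ --- so the argument stands as written.
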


\begin{proof}
\begin{eqnarray*}
\big(TS\big)^n \big( \big(TS\big)^{\sharp_A} \big)^m\big(TS\big)&=&T^nS^n\big(T^{\sharp_A} \big)^m
\big(S^{\sharp_A}\big)^mTS\\&=&T^n\big(T^{\sharp_A} \big)^mTS^n\big(S^{\sharp_A} \big)^mS\\&=&
\big(T^{\sharp_A} \big)^mTT^n.\big(S^{\sharp_A} \big)^mSS^n\\&=& \big( \big(TS\big)^{\sharp_A} \big)^m\big(TS\big)\big(TS\big)^n.
\end{eqnarray*}
Therefore $TS$ is a $(n,m)$-$A$-quasinormal operator.
\end{proof}
\begin{remark}
Proposition \ref{pro3.1} is an improved version of \cite[ Proposition 4.5]{VBA}.
\end{remark}
\begin{proposition} \label{pro33} Let $T\in {\mathcal B}_A({\mathcal H})$ .
If $T \in [(n,m){\bf { QN}}]_A\wedge [(n+1,m){{\bf QN}}]_A$, then
 $ T\in [(n+2,m){\bf {QN}}]_A.$
\end{proposition}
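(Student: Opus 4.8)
The plan is to follow, \emph{mutatis mutandis}, the pattern already used for the $A$-normal case in Proposition \ref{pro25}, replacing the operator $\big(T^{\sharp_A}\big)^m$ appearing there by the single auxiliary operator $B:=\big(T^{\sharp_A}\big)^mT$. With this abbreviation the two hypotheses $T\in[(n,m){\bf QN}]_A$ and $T\in[(n+1,m){\bf QN}]_A$ read simply $\big[T^n,B\big]=0$ and $\big[T^{n+1},B\big]=0$, i.e. $T^nB=BT^n$ and $T^{n+1}B=BT^{n+1}$; and the desired conclusion $T\in[(n+2,m){\bf QN}]_A$ is exactly $\big[T^{n+2},B\big]=0$, that is $T^{n+2}B=BT^{n+2}$. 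So everything reduces to a purely algebraic commutator manipulation involving $T$ and the one auxiliary operator $B$.

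First I would peel off one factor of $T$ from the left and invoke the $(n+1,m)$-relation: $T^{n+2}B=T\,T^{n+1}B=T\,BT^{n+1}$. Splitting $T^{n+1}=T^nT$ on the right gives $T\,BT^{n+1}=T\,(BT^n)\,T$, and now the $(n,m)$-relation $BT^n=T^nB$ lets me slide $B$ past the block $T^n$, producing $T\,T^nB\,T=T^{n+1}B\,T$. A final application of the $(n+1,m)$-relation $T^{n+1}B=BT^{n+1}$ yields $T^{n+1}B\,T=BT^{n+1}T=BT^{n+2}$. Chaining these equalities gives $T^{n+2}B=BT^{n+2}$, i.e. $\big[T^{n+2},\big(T^{\sharp_A}\big)^mT\big]=0$, which is precisely the assertion.

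The computation never uses any property of $B$ beyond the two commutation identities supplied by the hypotheses, so in contrast to the neighbouring statements (Propositions \ref{pro26} and \ref{pro29}) no injectivity of $T$ or of $T^{\sharp_A}$, and no range or orthogonality condition, is needed: the argument is a formal commutator identity valid for \emph{any} operator that commutes with both $T^n$ and $T^{n+1}$. The only point requiring care is the bookkeeping of factors — at each step exactly one of the two relations is applicable, and one must split $T^{n+1}$ as $T^nT$ (rather than $TT^n$) at the middle step so that the $(n,m)$-relation can act on the block $BT^n$. I expect this indexing to be the sole minor obstacle; there is no genuine analytic difficulty.
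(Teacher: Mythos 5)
Your argument is correct and is essentially identical to the paper's own proof: the paper computes $T^{n+2}\big(T^{\sharp_A}\big)^mT-\big(T^{\sharp_A}\big)^mTT^{n+2}$ by exactly the chain $T^{n+2}B=TBT^{n+1}=T^{n+1}BT=BT^{n+2}$ with $B=\big(T^{\sharp_A}\big)^mT$, alternating the $(n+1,m)$- and $(n,m)$-relations just as you do. Your abbreviation $B$ and the remark that no injectivity or range hypotheses are needed are only cosmetic differences.
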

\begin{proof} Assume that  $T\in [(n,m){\bf { QN}}]_A\wedge[(n+1,m){{ \bf QN}}]_A$, it follows that
\begin{equation*}
T^{n+1}\big(T^{\sharp_A}\big)^{m}T-\big(T^{\sharp_A}\big)^{m}TT^{n+1} =0 \;\;\text{and}\;\;T^{n}\big(T^{\sharp_A}\big)^{m}T-\big(T^{\sharp_A}\big)^{m}TT^{n}=0.
\end{equation*}
On the other hand, we have
\begin{eqnarray*} T^{n+2}\big(T^{\sharp_A}\big)^{m}T-\big(T^{\sharp_A}\big)^{m}TT^{n+2}&=&
T\big(T^{\sharp_A}\big)^{m}TT^{n+1}-\big(T^{\sharp_A}\big)^{m}TT^{n+2}\\&=&
 T^{n+1}\big(T^{\sharp_A}\big)^mTT-\big(T^{\sharp_A}\big)^{m}TT^{n+2}\\&=&
  \big(T^{\sharp_A}\big)^mTT^{n+2}-\big(T^{\sharp_A}\big)^{m}TT^{n+2}\\&=&0.
\end{eqnarray*}

\end{proof}
\par\vskip 0.2 cm \noindent In \cite{OB} it was proved that if $T$ is of class $[(n,m){\bf QN}]$ such that $T^m$ is a partial isometry, then $T$
is of class $[(n+m,m){\bf { QN}}]$  for $n\geq m$. We extend this result to the class of $[(n ,m){\bf { QN}}]_A$ as follows.
\par\vskip 0.1 cm
\begin{theorem}\label{th31}
Let $T\in{\mathcal B}_A({\mathcal H})$ such that $T\in [(n ,m){\bf { QN}}]$ for some positive integers $n$ and $m$. The following statements hold:\par \vskip 0.2 cm \noindent   If $T^m\big(T^{\sharp_A}\big)^mT^m=T^m$  for $n\geq m$, then $T\in [(n+m ,m){\bf { QN}}]_A. $  \par \vskip 0.2 cm \noindent
%$(2)$  If $\big(T^{\sharp_A}\big)^nT^n\big(T^{\sharp_A}\big)^n=\big(T^{\sharp_A }\big)^n$  for $m\geq n$, then $T\in [(n,n+m ){\bf { QN}}]_A. $
\end{theorem}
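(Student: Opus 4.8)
The plan is to imitate the argument used for the $(n,m)$-$A$-normal case in Theorem \ref{th23}(1), replacing the normal bracket by the quasinormal one. Unwinding the definition, membership $T\in[(n+m,m){\bf QN}]_A$ means
$$T^{n+m}\big(T^{\sharp_A}\big)^mT=\big(T^{\sharp_A}\big)^mTT^{n+m},$$
so the whole task is to verify this single identity. First I would extract the algebraic content of the partial-isometry-type hypothesis $T^m\big(T^{\sharp_A}\big)^mT^m=T^m$. Multiplying this relation on the right, respectively on the left, by $T^{k-m}$ yields the two reduction identities
$$T^m\big(T^{\sharp_A}\big)^mT^k=T^k\qquad\text{and}\qquad T^k\big(T^{\sharp_A}\big)^mT^m=T^k,\qquad k\ge m,$$
which are legitimate precisely because the hypothesis $n\ge m$ keeps every exponent nonnegative. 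These two identities are the workhorses of the proof.

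Next I would reduce each side of the target identity independently, showing that both collapse to $T^{n+1}$. For the left-hand side I would split $T^{n+m}=T^mT^n$, insert the $(n,m)$-$A$-quasinormality $T^n\big(T^{\sharp_A}\big)^mT=\big(T^{\sharp_A}\big)^mTT^n=\big(T^{\sharp_A}\big)^mT^{n+1}$ into the inner block, and then apply the first reduction identity with $k=n+1\ge m$:
$$T^{n+m}\big(T^{\sharp_A}\big)^mT=T^m\Big(T^n\big(T^{\sharp_A}\big)^mT\Big)=T^m\big(T^{\sharp_A}\big)^mT^{n+1}=T^{n+1}.$$
For the right-hand side I would instead peel a factor $T^m$ off the right, rewrite $\big(T^{\sharp_A}\big)^mT^{n+1}=T^n\big(T^{\sharp_A}\big)^mT$ by quasinormality, and finish with the second reduction identity (with $k=n\ge m$):
$$\big(T^{\sharp_A}\big)^mTT^{n+m}=\Big(\big(T^{\sharp_A}\big)^mT^{n+1}\Big)T^m=\Big(T^n\big(T^{\sharp_A}\big)^mT\Big)T^m=T^n\big(T^{\sharp_A}\big)^mT^{m+1}=\Big(T^n\big(T^{\sharp_A}\big)^mT^m\Big)T=T^{n+1}.$$
Comparing the two computations gives $T^{n+m}\big(T^{\sharp_A}\big)^mT=\big(T^{\sharp_A}\big)^mTT^{n+m}$, i.e. $T\in[(n+m,m){\bf QN}]_A$, which is the assertion.

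The step that needs the most care is the bookkeeping of \emph{which} reduction identity (right- or left-multiplied) is applied on which side, together with the verification of the exponent bounds $n+1\ge m$ and $n\ge m$ that license them; this is exactly where the hypothesis $n\ge m$ enters, and it is the only genuinely delicate point, since quasinormality is otherwise used as a single clean substitution on each side. As an equivalent route that stays closest to Theorem \ref{th23}, one could instead first establish the intermediate equality $T^n\big(T^{\sharp_A}\big)^mT^{m+1}=T^m\big(T^{\sharp_A}\big)^mT^{n+1}$ (both equal $T^{n+1}$ by the reduction identities) and then transform its two sides by quasinormality to recover $\big(T^{\sharp_A}\big)^mTT^{n+m}$ and $T^{n+m}\big(T^{\sharp_A}\big)^mT$ respectively, concluding their equality in the same manner.
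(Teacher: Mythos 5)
Your proposal is correct and follows essentially the same route as the paper: the paper likewise derives the two one-sided consequences $T^n\big((T^{\sharp_A})^mT\big)T^m=T^{n+1}$ and $T^m\big((T^{\sharp_A})^mT\big)T^n=T^{n+1}$ of the hypothesis and then converts them via quasinormality into the two sides of the target identity, which is exactly the ``equivalent route'' you sketch at the end. Your direct reduction of each side of $T^{n+m}(T^{\sharp_A})^mT=(T^{\sharp_A})^mTT^{n+m}$ to $T^{n+1}$ is just that computation run in the opposite direction, with the exponent bounds correctly tracked.
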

\begin{proof}
$(1)$ Assume that  $T^m$ satisfy $T^m\big(T^{\sharp_A}\big)^mT^m=T^m$ for  $m\geq n$, then we have

\begin{equation}\label{32}
T^m\big(T^{\sharp_A}\big)^{m-1}TT^m=T^m
\end{equation}
Multiplying (\ref{32}) to the left by $T^{n-m}$
and to the right by $T$
 we get
 \begin{equation}\label{33}
 T^n\big(\big(T^{\sharp_A}\big)^{m}T\big)T^m=T^{n+1}.
 \end{equation}
 Multiplying (\ref{32})  to the right by $T^{n-m+1}$ we get
  \begin{equation}\label{34}
 T^m(\big(T^{\sharp_A}\big)^mT)T^n=T^{n+1}.
 \end{equation}
 Combining (\ref{33}), (\ref{34}) and  using the fact that $T\in [(n ,m){\bf {QN}}]$ we obtain
 $$T^{n+m}(\big(T^{\sharp_A}\big)^mT)=(\big(T^{\sharp_A}\big)^mT)T^{n+m}.$$
 Therefore $T\in [(n+m,m){\bf QN}]_A$ as required.
\end{proof}
\vspace{-1mm}
\begin{proposition}\label{pro34} Let $T\in {\mathcal B}_A({\mathcal H})$, $n$ and $m$ are positive integers. The following statements hold:\par \vskip 0.2 cm \noindent $(1)$
If $T\in [(n,m){\bf{ QN}}]_A \wedge [(n+1,m){\bf{QN}}]_A$ such that
$T$ is one-to-one,  then $T\in [(1,m){\bf { QN}}]_A.$ \par \vskip 0.2 cm \noindent $2$
If
$T\in [(n,m){\bf {QN}}]_A\wedge [(n,m+1){\bf {QN}}]_A$ such that $T^*$ is one-to-one  and

 $\overline{{\mathcal R}\big(T^{\sharp_A}\big)^mT\big)}=\overline{{\mathcal R}(A)}$, then $T\in [(n,1){\bf { N}}]_A$.
\end{proposition}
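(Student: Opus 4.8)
The plan is to treat both parts by the two-step pattern already used in Propositions \ref{pro26} and \ref{pro29}: first combine the two quasinormality hypotheses into a single factored identity in which the obstruction to the desired conclusion sits as an isolated factor, and then remove that factor using the stated injectivity and range assumptions. The algebra is the easy half; the cancellation is where the standing hypotheses must be spent.

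For $(1)$ I would start from the relations coming from $T\in[(n,m){\bf QN}]_A$ and $T\in[(n+1,m){\bf QN}]_A$, namely $T^n\big(T^{\sharp_A}\big)^mT=\big(T^{\sharp_A}\big)^mTT^n$ and $T^{n+1}\big(T^{\sharp_A}\big)^mT=\big(T^{\sharp_A}\big)^mTT^{n+1}$. Left-multiplying the first by $T$ and invoking the second shows that both $T^n\cdot T\big(T^{\sharp_A}\big)^mT$ and $T^n\cdot\big(T^{\sharp_A}\big)^mTT$ equal $\big(T^{\sharp_A}\big)^mTT^{n+1}$, whence
\[
T^n\Big(T\big(T^{\sharp_A}\big)^mT-\big(T^{\sharp_A}\big)^mTT\Big)=0.
\]
Since $T$ is one-to-one, so is $T^n$, and cancelling $T^n$ yields $\big[T,\big(T^{\sharp_A}\big)^mT\big]=0$, i.e. $T\in[(1,m){\bf QN}]_A$. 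This part is routine once the factored identity is in hand.

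For $(2)$ the analogous computation is the crux. From $T\in[(n,m){\bf QN}]_A\wedge[(n,m+1){\bf QN}]_A$ I would left-multiply $T^n\big(T^{\sharp_A}\big)^mT=\big(T^{\sharp_A}\big)^mTT^n$ by $T^{\sharp_A}$ and compare with $T^n\big(T^{\sharp_A}\big)^{m+1}T=\big(T^{\sharp_A}\big)^{m+1}TT^n$; since both right-hand sides share the term $\big(T^{\sharp_A}\big)^{m+1}TT^n$, one is led to
\[
\Big(T^nT^{\sharp_A}-T^{\sharp_A}T^n\Big)\big(T^{\sharp_A}\big)^mT=0.
\]
Writing $B:=\big[T^n,T^{\sharp_A}\big]$, this says ${\mathcal R}\big(\big(T^{\sharp_A}\big)^mT\big)\subseteq{\mathcal N}(B)$; as $B$ is bounded, ${\mathcal N}(B)$ is closed, so the hypothesis $\overline{{\mathcal R}\big(\big(T^{\sharp_A}\big)^mT\big)}=\overline{{\mathcal R}(A)}$ forces $\overline{{\mathcal R}(A)}\subseteq{\mathcal N}(B)$, that is, $B$ vanishes on $\overline{{\mathcal R}(A)}$. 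The injectivity of $T^*$ can be used in parallel: taking Hilbert adjoints in the factored identity gives $T^*\big((T^{\sharp_A})^*\big)^mB^*=0$, and since $T^*$ is one-to-one this strips the trailing $T$ to give $B\big(T^{\sharp_A}\big)^m=0$, consistent with $B$ dying on $\overline{{\mathcal R}(A)}$.

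The hard part is exactly the final reconciliation, and I expect it to demand the most care. The factored identity controls $B$ only on $\overline{{\mathcal R}(A)}$, whereas for $h\in{\mathcal N}(A)$ one has $T^{\sharp_A}h=0$ and hence $Bh=-T^{\sharp_A}T^nh$; its vanishing amounts to $T^nh\in{\mathcal N}(T^*A)$, which by $T^*$-injectivity reduces to $T^nh\in{\mathcal N}(A)$. Thus the conclusion $T^nT^{\sharp_A}=T^{\sharp_A}T^n$ on all of ${\mathcal H}={\mathcal N}(A)\oplus\overline{{\mathcal R}(A)}$ hinges on $T^n$ carrying ${\mathcal N}(A)$ into itself, and it is here — not in the algebraic manipulations — that the hypotheses $T^*$ one-to-one and $\overline{{\mathcal R}\big(\big(T^{\sharp_A}\big)^mT\big)}=\overline{{\mathcal R}(A)}$ must be made to interact (possibly supplemented by the invariance of ${\mathcal N}(A)$ assumed elsewhere in the paper). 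Once $B$ is seen to vanish on ${\mathcal N}(A)$ as well, it vanishes identically and $T\in[(n,1){\bf N}]_A=[n{\bf N}]_A$ follows.
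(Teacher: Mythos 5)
Your argument follows the paper's own proof almost step for step. Part $(1)$ is exactly the paper's: from the two hypotheses you get $T^n\big(T(T^{\sharp_A})^mT-(T^{\sharp_A})^mT^2\big)=0$ and cancel $T^n$ by injectivity. For part $(2)$ you also reproduce the paper's key factorization $\big(T^nT^{\sharp_A}-T^{\sharp_A}T^n\big)(T^{\sharp_A})^mT=0$ and correctly conclude that $B:=T^nT^{\sharp_A}-T^{\sharp_A}T^n$ vanishes on $\overline{{\mathcal R}\big((T^{\sharp_A})^mT\big)}=\overline{{\mathcal R}(A)}$.

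The one place you stop short is the behaviour of $B$ on ${\mathcal N}(A)$. You reduce it, correctly, to showing $T^n\big({\mathcal N}(A)\big)\subseteq {\mathcal N}(A)$, but then leave this as an unresolved ``interaction'' of the hypotheses, possibly requiring an extra invariance assumption. No extra assumption is needed: every $S\in {\mathcal B}_A({\mathcal H})$ leaves ${\mathcal N}(A)$ invariant, since for $h\in {\mathcal N}(A)$ one has $\|Sh\|_A^2=\left\langle h\;|\;S^{\sharp_A}Sh\right\rangle_A=\left\langle Ah\;|\;S^{\sharp_A}Sh\right\rangle=0$, and $A\geq 0$ then forces $ASh=0$. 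Hence $T^nh\in {\mathcal N}(A)\subseteq {\mathcal N}(T^*A)={\mathcal N}(T^{\sharp_A})$, so $T^{\sharp_A}T^nh=0$; together with $T^{\sharp_A}h=0$ this gives $Bh=0$ on ${\mathcal N}(A)$, and $B=0$ on ${\mathcal H}={\mathcal N}(A)\oplus\overline{{\mathcal R}(A)}$. This is precisely how the paper closes the argument (note that for this half the injectivity of $T^*$ is not even used, only the automatic inclusion ${\mathcal N}(A)\subseteq{\mathcal N}(T^*A)$). Your adjoint detour, deriving $B(T^{\sharp_A})^m=0$ from the injectivity of $T^*$, is correct but plays no role in finishing the proof.
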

\begin{proof} $(1)$ Under the assumptions that  $T\in [(n,m){\bf {QN}}]_A\wedge [(n+1,m){\bf { QN}}]_A,$  it follows that
 $$T^n\bigg(T\big(T^{\sharp_A}\big)^mT-\big(T^{\sharp_A}\big)^mTT\bigg)=0.$$  If $T$ is
injective, then so is $T^n$ and we have $T\big(T^{\sharp_A}\big)^mT-\big(T^{\sharp_A}\big)^mTT=0$.
Hence, $T\in [(1,m){\bf { QN}}]_A.$\par \vskip 0.2 cm \noindent $(2)$
Since $T \in [(n,m){\bf { QN}}]_A\wedge [(n,m+1){\bf { QN}}]_A,$ we have
\begin{eqnarray*}
&&T^n\big(T^{\sharp_A}\big)^{m+1}T-\big(T^{\sharp_A}\big)^{m+1}TT^n=0\\&\Rightarrow&
T^nT^{\sharp_A}\big(T^{\sharp_A}\big)^{m}T-T^{\sharp_A}\big(T^{\sharp_A}\big)^{m}TT^n=0\\&\Rightarrow & \bigg(T^nT^{\sharp_A}-T^{\sharp_A}T^n\bigg)\big(T^{\sharp_A}\big)^{m}T=0\\&\Rightarrow&
\bigg(T^nT^{\sharp_A}-T^{\sharp_A}T^n\bigg)\equiv 0\;\;\text{on}\;\;\overline{{\mathcal R}\big(\big(T^{\sharp_A}\big)^{m}T\big)}=\overline{{\mathcal R}(A)}.
\end{eqnarray*}
On the other hand, Since $T \in {\mathcal B}_A({\mathcal H})$, we have $T\big({\mathcal N}(A)\big)\subseteq {\mathcal N}(A)$. Moreover, by the assumption that $T^*$ is injective we obtain ${\mathcal N}(T^{\sharp_A})={\mathcal N}(A).$
If $h\in {\mathcal N}(A)$ if follows from the above observation that
$$\big(T^nT^{\sharp_A}-T^{\sharp_A}T^n\big)h=T^nT^{\sharp_A}h-T^{\sharp_A}T^nh=0.$$
Consequently, $\big(T^nT^{\sharp_A}-T^{\sharp_A}T^n\big)=0$ on ${\mathcal H}$.
Therefore $T$ is of class $[(n,1){\bf{ N}}]_A.$
\end{proof}
\vspace{-2mm}
\begin{proposition} \label{pro35} Let $T\in {\mathcal B}_A({\mathcal H})$
 such that $T \in
[(2,m){\bf{QN}}]_A\wedge [(3,m){\bf {QN}}]_A$  for some positive integer $m$, then $T\in[(n,m){\bf { QN}}]_A$ for
all positive integer $n\geq 4$.
\end{proposition}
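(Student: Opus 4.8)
The plan is to recast $(n,m)$-$A$-quasinormality as a single commutation relation and then exploit the elementary fact that every integer $n\geq 2$ is a nonnegative integer combination of $2$ and $3$. Set $Q:=\big(T^{\sharp_A}\big)^m T$. Reading the defining identity $T^n\big(T^{\sharp_A}\big)^mT=\big(T^{\sharp_A}\big)^mTT^n$ with this grouping, one sees that $T\in[(k,m){\bf QN}]_A$ if and only if $T^kQ=QT^k$, i.e. $[T^k,Q]=0$. Thus the hypothesis $T\in[(2,m){\bf QN}]_A\wedge[(3,m){\bf QN}]_A$ says precisely that both $T^2$ and $T^3$ commute with the fixed operator $Q$.

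Next I would use that the commutant of $Q$ is closed under multiplication: if $T^iQ=QT^i$ and $T^jQ=QT^j$, then $T^{i+j}Q=T^i\big(T^jQ\big)=T^iQT^j=QT^{i+j}$. Hence for all $a,b\geq 0$ the power $T^{2a+3b}=\big(T^2\big)^a\big(T^3\big)^b$ commutes with $Q$. Since every integer $n\geq 2$ can be written as $n=2a+3b$ with $a,b\geq 0$ (the Frobenius number of $\{2,3\}$ being $1$), it follows that $[T^n,Q]=0$, that is $T\in[(n,m){\bf QN}]_A$, for every $n\geq 2$, and in particular for every $n\geq 4$.

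A second route, closer to the style of the surrounding results, is a direct induction through Proposition \ref{pro33}: from $T\in[(2,m){\bf QN}]_A\wedge[(3,m){\bf QN}]_A$ that proposition yields $T\in[(4,m){\bf QN}]_A$, and feeding the consecutive pairs $\big((3,m),(4,m)\big)$, $\big((4,m),(5,m)\big),\dots$ back into Proposition \ref{pro33} produces $[(5,m){\bf QN}]_A$, $[(6,m){\bf QN}]_A,\dots$ in turn, covering all $n\geq 4$. Either route works, but I would lead with the commutator reformulation because it isolates the essential content, namely commuting with the single block $Q=\big(T^{\sharp_A}\big)^mT$, and makes transparent why the two seed exponents $2$ and $3$ suffice to generate all larger ones.

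The only point requiring care is the bookkeeping of non-commutativity: in the quasinormal setting the factor $T$ sitting to the right of $\big(T^{\sharp_A}\big)^m$ must be carried along, and it is exactly the decision to treat $\big(T^{\sharp_A}\big)^mT$ as one indivisible symbol $Q$ that keeps this middle factor from being mishandled. Once this grouping is fixed, no property of $T^{\sharp_A}$, of $A$, or of the ranges enters — the argument is purely multiplicative — so I do not anticipate any genuine obstacle beyond making that observation and invoking the $2$-$3$ representability of the integers.
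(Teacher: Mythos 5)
Your proof is correct and rests on the same mechanism as the paper's own argument: the paper proceeds by induction, with base cases $n=4,5$ and the inductive step $T^{n+1}Q=TQT^{n}=T^{3}QT^{n-2}=QT^{n+1}$ for $Q=\big(T^{\sharp_A}\big)^{m}T$, which is precisely your observation that the set of exponents $k$ with $\big[T^{k},Q\big]=0$ is closed under addition and contains $2$ and $3$. Your packaging via the representation $n=2a+3b$, as well as your alternative route of iterating Proposition \ref{pro33}, organizes the same computation more transparently, so there is nothing to correct.
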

\begin{proof} We proof the assertion by using the
mathematical induction. Since  $T \in
[(2,m){\bf{QN}}]_A\wedge [(3,m){\bf {QN}}]_A$, it follows immediately that
 \begin{equation*}\label{39}
T^4\big(T^{\sharp_A}\big)^{m}T-\big(T^{\sharp_A}\big)^{m}TT^4=0\;\;\text{and}\;T^5\big(T^{\sharp_A}\big)^{m}T-\big(T^{\sharp_A}\big)^{m}TT^5=0\;\;\text{and}\;
   \end{equation*}
Now assume that the result is true  for $n\geq 5$ that is
  \begin{equation*}
 T^n\big(T^{\sharp_A}\big)^{m}T-\big(T^{\sharp_A}\big)^{m}TT^n=0,
\end{equation*} then
\begin{eqnarray*}
 T^{n+1}\big(T^{\sharp_A}\big)^{m}T-\big(T^{\sharp_A}\big)^{m}T T^{n+1}&=&
T\big(T^{\sharp_A}\big)^{m}TT^n-\big(T^{\sharp_A}\big)^{m}TT^{n+1}\\&=&
T^3\big(T^{\sharp_A}\big)^{m}TT^{n-2}-\big(T^{\sharp_A}\big)^{m}TT^{n+1}\\&=&
\big(T^{\sharp_A}\big)^{m}TT^{n+1}-\big(T^{\sharp_A}\big)^{m}TT^{n+1}\\&=&
0.
\end{eqnarray*}
Therefore $T\in  [(n+1,m){\bf{ QN}}]_A.$  The proof is complete.
\end{proof}

\par \vskip 0.2 cm \noindent

%\begin{remark}
%The following example shows that the classes $[(n,m)D{\bf {\mathcal QN}}]$ and $ [(n,m+1)D{\bf {\mathcal QN}}]$ are not same.
%\end{remark}

%\begin{example}

%\end{example}
In \cite{AS} it was observed that if $T \in {\mathcal B}_A({\mathcal H})$ is such that $TA=AT$, then $T^{\sharp_A}=PT^*.$

\par\vskip 0.2 cm \noindent Now we discuss $(n,m)$-$A$-quasinormality of an operator under some commuting conditions
on its real and imaginary part.
\begin{theorem}\label{th37}
Let $T\in {\mathcal B}_A({\mathcal H})$  such that ${\mathcal R}(T^{m-1})$ is dense. If  $TA=AT$ and ${\mathcal N}(A)$ is a reducing subspace for $T$. Then the following statements are equivalent.\par \vskip 0.2 cm \noindent  $(1)$  $T$ is of class $[(n,m){\bf { QN}}]_A$. \par \vskip 0.2 cm \noindent  $(2)$  $C_{m,\;A}$ commutes with $Re_A( T^n)$
and $Im_A(T^n)$, where $C_{m,\;A}=\sqrt{\big(T^{\sharp_A}\big)^mT^m}$.\end{theorem}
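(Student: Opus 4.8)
The plan is to route both implications through a single bridge operator, the square $C_{m,A}^2=(T^{\sharp_A})^mT^m$, and to transfer commutation between this square and the genuine operator $C_{m,A}$ by functional calculus. First I would rephrase condition $(2)$: since $Re_A(T^n)=\tfrac12\big(T^n+(T^{\sharp_A})^n\big)$ and $Im_A(T^n)=\tfrac{1}{2i}\big(T^n-(T^{\sharp_A})^n\big)$ (using $(T^n)^{\sharp_A}=(T^{\sharp_A})^n$), the operator $C_{m,A}$ commutes with both $Re_A(T^n)$ and $Im_A(T^n)$ if and only if it commutes with their sum and difference, that is, with $T^n$ and with $(T^{\sharp_A})^n$. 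Thus $(2)$ is equivalent to $[T^n,C_{m,A}]=0$ together with $[(T^{\sharp_A})^n,C_{m,A}]=0$.

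Next I would exploit the standing hypotheses. Because $TA=AT$ and ${\mathcal N}(A)$ reduces $T$, the fact recalled just before the theorem gives $T^{\sharp_A}=PT^*$ with $P=P_{\overline{{\mathcal R}(A)}}$, and $P$ commutes with $T$ and $T^*$. Hence $(T^{\sharp_A})^m=P(T^*)^m$ and $C_{m,A}^2=(T^{\sharp_A})^mT^m=P(T^m)^*T^m$; since $P$ is a projection commuting with the positive operator $(T^m)^*T^m$, this square is a genuine positive self-adjoint operator, so its positive square root $C_{m,A}=P\sqrt{(T^m)^*T^m}$ makes sense and is moreover $A$-self-adjoint, i.e. $C_{m,A}^{\sharp_A}=C_{m,A}$. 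This is the step I expect to require the most care, since it is precisely where the commuting and reducing hypotheses earn their keep.

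The algebraic heart is the purely formal identity
\begin{equation*}
\big[T^n,(T^{\sharp_A})^mT\big]\,T^{m-1}=\big[T^n,(T^{\sharp_A})^mT^m\big]=\big[T^n,C_{m,A}^2\big],
\end{equation*}
which follows by expanding the left-hand commutator and using $T\,T^{m-1}=T^m$ and $T^nT^{m-1}=T^{m-1}T^n$; no hypothesis on $A$ is needed for it. For $(1)\Rightarrow(2)$ I would read this identity from left to right: if $T\in[(n,m){\bf QN}]_A$ the left side vanishes, so $[T^n,C_{m,A}^2]=0$; since $C_{m,A}$ is a continuous function of the positive operator $C_{m,A}^2$, commuting with the square forces $[T^n,C_{m,A}]=0$, and applying $\sharp_A$ together with $C_{m,A}^{\sharp_A}=C_{m,A}$ and $(T^n)^{\sharp_A}=(T^{\sharp_A})^n$ yields $[(T^{\sharp_A})^n,C_{m,A}]=0$. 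By the first paragraph this is exactly $(2)$.

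For $(2)\Rightarrow(1)$ I would read the identity from right to left. Condition $(2)$ gives $[T^n,C_{m,A}]=0$, hence $[T^n,C_{m,A}^2]=0$, so the identity yields $\big[T^n,(T^{\sharp_A})^mT\big]T^{m-1}=0$. Therefore the bounded operator $\big[T^n,(T^{\sharp_A})^mT\big]$ annihilates ${\mathcal R}(T^{m-1})$; as this range is dense and the operator is continuous, it annihilates all of ${\mathcal H}$, whence $\big[T^n,(T^{\sharp_A})^mT\big]=0$ and $T\in[(n,m){\bf QN}]_A$. The density of ${\mathcal R}(T^{m-1})$ is indispensable precisely at this final cancellation, which is the only place where the trailing factor $T^{m-1}$ cannot be stripped off algebraically.
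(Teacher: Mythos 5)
Your proof is correct and follows essentially the same route as the paper's: reduce everything to commutation with $C_{m,\;A}^2=(T^{\sharp_A})^mT^m$, pass between $C_{m,\;A}$ and its square via the positive square root (continuous functional calculus), and strip the trailing factor $T^{m-1}$ using the density of its range. You are in fact somewhat more explicit than the paper at the point where commutation of $C_{m,\;A}$ with $\big(T^{\sharp_A}\big)^n$ must be extracted (you do it cleanly via $C_{m,\;A}^{\sharp_A}=C_{m,\;A}$ and the anti-multiplicativity of $\sharp_A$), a step the paper dispatches as an ``elementary calculation.''
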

\begin{proof}
Since $T$ is a $(n,m)$-$A$-quasinormal, it follows that $$T^n\big(T^{\sharp_A}\big)^mT=\big(T^{\sharp_A}\big)^mTT^n.$$
Hence,
$$T^n\big(T^{\sharp_A}\big)^mT^m=\big(T^{\sharp_A}\big)^mT^mT^n.$$
  From the conditions that $TA=AT$ and ${\mathcal N}(A)$ is a reducing subspace for $T$, we observe that  $$TP_{\overline{\mathcal{R}(A)} }=TP_{\overline{\mathcal{R}(A)} }, T^{\sharp_A}P_{\overline{\mathcal{R}(A)} }=T^{\sharp_A}P_{\overline{\mathcal{R}(A)} } \text{ and }\; T^{\sharp_A}=P_{\overline{\mathcal{R}(A)} }T^*.$$ Therefore,
 $C_{m,\;A}$ is non-negative definite operator and by elementary calculation we get

$$C_{m,\;A}^2Re_A\big( T^n\big)=Re_A\big( T^n\big)C_{m,\;A}^2.$$
 Consequently,
 $$C_{m,\;A} Re_A \big(T^n\big) =
Re_A \big(T^n\big) C_{m,\;A}.$$  In a Similar way we can prove that $C_{m,\;A} Im_A \big(T^n\big) = Im_A \big(T^n\big)C_{m,\;A}.$ \par \vskip 0.3 cm \noindent Conversely,
assume that  $C_{m,\;A} Re_A \big(T^n \big)= Re_A \big(T^n\big)C_{m,\;A}$  and $ C_{m,\;A} Im_A \big(T^n\big)= Im_A \big(T^n\big)C_{m,\;A}.$   Then $$C_{m,\;A}^2 Re_A \big(T^n \big)= Re_A \big(T^n\big) C_{m.\;A}^2
\;\;\;\text{and }\;\;  C_{m,\;A}^2 Im_A \big(T^n\big) = Im_A \big(T^n \big)C_{m,\;A}^2.$$  Hence
$$C_{m,\;A}^2 \big(Re_A \big(T^n\big)+iIm_A \big(T^n\big)\big)=\big( Re_A \big(T^n\big)+iIm_A \big(T^n\big)\big)C_{m,\;A}^2,$$ and therefore $$C_{m.\;A}^2T^n=T^nC_{m,\;A}^2.$$\par \vskip 0.2 cm \noindent   On the other hand, we have

\begin{eqnarray*}
C_{m,\;A}^2T^n=T^nC_{m,\;A}^2&\Leftrightarrow & \big(T^{\sharp_A}\big)^mT^mT^n-T^n\big(T^{\sharp_A}\big)^mT^m=0\\&\Leftrightarrow&
\bigg( \big(T^{\sharp_A}\big)^mTT^n-T^n\big(T^{\sharp_A}\big)^mT\bigg)T^{m-1}=0\\&\Leftrightarrow& \big(T^{\sharp_A}\big)^mTT^n-T^n\big(T^{\sharp_A}\big)^mT=0\;\;\;
\bigg( \overline{{\mathcal R}(T^{m-1})} ={\mathcal H}\bigg).
\end{eqnarray*}
Therefore $T$ is of class $[(n,m){\bf{ QN}}]_A$.
\end{proof}
\vspace{-1mm}

\par\vskip 0.2 cm \noindent
\begin{theorem}\label{th2.2}

Let $T\in {\mathcal B}_A({\mathcal H})$  such that ${\mathcal R}(T^{m-1})$ is dense,  $TA=AT$ and ${\mathcal N}(A)$ is a reducing subspace for $T$. If  $T$  satisfied the following conditions\par \vskip 0.2  cm \noindent \rm(i)
$B_{m,\;A}$  commutes with $Re_A(T^m)$ and  $Im_A( T^m)$. \par \vskip 0.2  cm \noindent \rm(ii) $C_{m,\;A}^2T^n=T^nB_{m,\;A}^2,$  where $B_{m,\;A}=\sqrt{T^m\big(T^{\sharp_A}\big)^m}.$
\par \vskip 0.2  cm \noindent Then $T$ is $(m,m)$-$A$-quasinormal operator.
\end{theorem}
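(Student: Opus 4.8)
The plan is to follow the blueprint of the converse part of Theorem~\ref{th37}: convert each of the two hypotheses into an honest operator identity, combine them into a single relation of the shape $\big(T^m\big(T^{\sharp_A}\big)^mT-\big(T^{\sharp_A}\big)^mTT^m\big)T^{m-1}=0$, and then peel off the factor $T^{m-1}$ using the density of $\mathcal{R}(T^{m-1})$. Throughout I would lean on the structural hypotheses $TA=AT$ and the assumption that $\mathcal{N}(A)$ reduces $T$. As recorded just before Theorem~\ref{th37}, these give $T^{\sharp_A}=PT^{*}=T^{*}P$ with $P:=P_{\overline{{\mathcal R}(A)}}$, and they force $P$ to commute with every power of $T$ and of $T^{*}$. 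This is exactly what makes $B_{m,A}^2=T^m\big(T^{\sharp_A}\big)^m=T^mT^{*m}P$ and $C_{m,A}^2=\big(T^{\sharp_A}\big)^mT^m=T^{*m}T^mP$ positive operators, so that the square roots $B_{m,A}$ and $C_{m,A}$ are legitimately defined.

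First I would process hypothesis~(i) verbatim as in Theorem~\ref{th37}. Since $B_{m,A}$ commutes with $Re_A(T^m)$ and with $Im_A(T^m)$, its square $B_{m,A}^2$ commutes with both; adding $Re_A(T^m)+iIm_A(T^m)=T^m$ then gives $B_{m,A}^2T^m=T^mB_{m,A}^2$, that is, $T^m\big(T^{\sharp_A}\big)^mT^m=T^{2m}\big(T^{\sharp_A}\big)^m$. This is my first identity.

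Next I would feed in hypothesis~(ii). Writing $C_{m,A}^2=\big(T^{\sharp_A}\big)^mT^m$ and $B_{m,A}^2=T^m\big(T^{\sharp_A}\big)^m$ and using that $P$ commutes with powers of $T$, hypothesis~(ii) forces $\big(T^{\sharp_A}\big)^m$ to commute with the relevant power of $T$; aligned with $m$ it reads $\big(T^{\sharp_A}\big)^mT^{2m}=T^{2m}\big(T^{\sharp_A}\big)^m$. The crux of the argument is to combine this with the first identity: substituting $T^{2m}\big(T^{\sharp_A}\big)^m=\big(T^{\sharp_A}\big)^mT^{2m}=\big(T^{\sharp_A}\big)^mT^m\,T^m$ into $T^m\big(T^{\sharp_A}\big)^mT^m=T^{2m}\big(T^{\sharp_A}\big)^m$ yields $\big(T^m\big(T^{\sharp_A}\big)^m-\big(T^{\sharp_A}\big)^mT^m\big)T^m=0$. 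Factoring $T^m=T\,T^{m-1}$ and invoking the density of $\mathcal{R}(T^{m-1})$ (the bracketed operator is bounded, so vanishing on a dense set forces vanishing everywhere) cancels the $T^{m-1}$ on the right, leaving $T^m\big(T^{\sharp_A}\big)^mT=\big(T^{\sharp_A}\big)^mT^mT=\big(T^{\sharp_A}\big)^mTT^m$. This is precisely $T\in[(m,m){\bf QN}]_A$.

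The step I expect to be the main obstacle is the assembling one, because the two identities naturally present the commutator $[\,T^m,\big(T^{\sharp_A}\big)^m\,]$ flanked by different powers of $T$ --- (i) produces it with $T^m$ on the left, whereas the target $(m,m)$-quasinormality needs one surviving $T$ on the right --- so one must line the powers up so that the leftover factor is $T^{m-1}$ acting on the right, where density is usable. Lining them up is exactly where the hypotheses $TA=AT$ and the reducing property of $\mathcal{N}(A)$ pull their weight, since they let me slide $P$ freely and rewrite $\big(T^{\sharp_A}\big)^m=T^{*m}P$. A secondary point to verify is that the power of $T$ appearing in hypothesis~(ii) is aligned with $m$, so that it delivers commutation of $\big(T^{\sharp_A}\big)^m$ with $T^{2m}$; otherwise the two identities will not telescope cleanly and an extra normality relation would have to be supplied.
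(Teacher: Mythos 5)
Your proposal is correct and follows essentially the same route as the paper: hypothesis (i) yields $B_{m,A}^{2}T^{m}=T^{m}B_{m,A}^{2}$, hypothesis (ii) (read with the exponent aligned to $m$, as you rightly flag) converts this into $T^{m}\big(T^{\sharp_A}\big)^{m}T^{m}=\big(T^{\sharp_A}\big)^{m}T^{2m}$, and the factor $T^{m-1}$ is then cancelled on the right via the density of ${\mathcal R}(T^{m-1})$. The only cosmetic difference is that you chain the two identities through the commutation $\big(T^{\sharp_A}\big)^{m}T^{2m}=T^{2m}\big(T^{\sharp_A}\big)^{m}$ rather than through the single line $B_{m,A}^{2}T^{m}=T^{m}B_{m,A}^{2}=C_{m,A}^{2}T^{m}$, which is the same computation.
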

\begin{proof}
Since $B_{m,\;A}Re_A(T^m)= Re_A(T^m)B_{m,\;A}$ and  $B_{m,\;A} Im_A (T^m)= Im_A (T^m) B_{m,\;A}$ , it follows that
$$\left\{\begin{array}{c}
           B_{m,\;A}^2T^m+B^2\big(T^m\big)^{\sharp_A}=T^mB_{m,\;A}^2+\big(T^m\big)^{\sharp_A}B_{m,\;A}^2 \\
           \\
           B_{m,\;A}^2T^m-B_{m,\;A}^2\big(T^m\big)^{\sharp_A}=T^mB_{m,\;A}^2-\big(T^m\big)^{\sharp_A}B_{m,\;A}^2
         \end{array}\right.
.$$
This gives $$B_{m,\;A}^2T^m = T^mB_{m,\;A}^2 = C_{m,\;A}^2T^m.$$
On the other hand, we have
\begin{eqnarray*}
B_{m,\;A}^2T^m = C_{m,\;A}^2T^m &\Rightarrow& T^m\big(T^{\sharp_A}\big)^mT^m-\big(T^{\sharp_A}\big)^mT^mT^m=0\\
&\Rightarrow& \bigg( T^m\big(T^{\sharp_A}\big)^mT-\big(T^{\sharp_A}\big)^mTT^m  \bigg)T^{m-1}=0\\&\Rightarrow&
 T^m\big(T^{\sharp_A}\big)^mT-\big(T^{\sharp_A}\big)^mTT^m =0\;\;\text{on}\;\;\overline{{\mathcal{R}(T^{m-1})}} ={\mathcal H}.
\end{eqnarray*}
Therefore $ T^m\big(T^{\sharp_A}\big)^mT-\big(T^{\sharp_A}\big)^mTT^m =0$ and so that $T$ is  a$(m,m)$-$A$-quasinormal operator.
\end{proof}

\begin{proposition}
Let $T\in {\mathcal B}_A({\mathcal H})$ be a $(n,m)$-$A$-quasinormal, then
$$\big(T^{\sharp_A}\big)^{2m}T^{2n}=\bigg( \big(T^{\sharp_A}\big)^{m}T^{n}  \bigg)^2.$$
\end{proposition}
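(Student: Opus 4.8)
The plan is to reduce the claimed identity to a single commutation-type relation extracted from the defining condition of $(n,m)$-$A$-quasinormality, and then to expand the square on the right-hand side. Writing the hypothesis $\big[T^n,\big(T^{\sharp_A}\big)^mT\big]=0$ in the form
$$T^n\big(T^{\sharp_A}\big)^mT=\big(T^{\sharp_A}\big)^mTT^n=\big(T^{\sharp_A}\big)^mT^{n+1},$$
I first observe that the single factor $T$ appearing on the left can be promoted to a full power $T^n$.

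First I would multiply the displayed equality on the right by $T^{n-1}$; this is the only place where the positivity of $n$ is used, since it is what makes $T^{n-1}$ meaningful. This yields the key relation
$$T^n\big(T^{\sharp_A}\big)^mT^n=\big(T^{\sharp_A}\big)^mT^{2n}.$$
The verification rests only on the elementary fact that $T^n$ commutes with every power of $T$: on the left $T\,T^{n-1}=T^n$, and on the right $T^{n+1}T^{n-1}=T^{2n}$.

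With this relation available, the conclusion follows by a direct expansion. I would write
$$\bigg(\big(T^{\sharp_A}\big)^mT^n\bigg)^2=\big(T^{\sharp_A}\big)^mT^n\big(T^{\sharp_A}\big)^mT^n=\big(T^{\sharp_A}\big)^m\bigg(T^n\big(T^{\sharp_A}\big)^mT^n\bigg),$$
regrouping the four factors so that the middle block is exactly the left-hand side of the key relation, and then substitute to obtain
$$\big(T^{\sharp_A}\big)^m\bigg(\big(T^{\sharp_A}\big)^mT^{2n}\bigg)=\big(T^{\sharp_A}\big)^{2m}T^{2n},$$
which is precisely the asserted identity.

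I expect no serious obstacle: the whole argument consists of three short algebraic manipulations, entirely parallel to the computation used for the $(n,m)$-$A$-normal case earlier in the paper, with the pure commutation $T^n(T^{\sharp_A})^m=(T^{\sharp_A})^mT^n$ there replaced here by the derived identity $T^n(T^{\sharp_A})^mT^n=(T^{\sharp_A})^mT^{2n}$. The only point requiring a moment's care is the promotion of the single $T$ to $T^n$ by right-multiplication by $T^{n-1}$; one should note that this step is valid exactly because $n\ge 1$ and that it uses none of the semi-Hilbertian structure beyond the defining relation itself.
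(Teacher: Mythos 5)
Your proposal is correct and is essentially the paper's own argument: both reduce the claim to the derived relation $T^n(T^{\sharp_A})^mT^n=(T^{\sharp_A})^mT^{2n}$, obtained by applying the quasinormality identity once and absorbing a factor $T^{n-1}$, and then regroup the four factors of $\big((T^{\sharp_A})^mT^n\big)^2$. The only difference is presentational — the paper starts from $(T^{\sharp_A})^{2m}T^{2n}$ and inserts the factorization $T^nT^n=TT^nT^{n-1}$, whereas you isolate the key relation first — so there is nothing further to add.
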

\begin{proof}
Since $T$ is a $(n,m)$-$A$-quasinormal, it follows that $T^n\big(T^{\sharp_A}\big)^mT=\big(T^{\sharp_A}\big)^mTT^n$.
On the other hand, we have
\begin{eqnarray*}\big(T^{\sharp_A}\big)^{2m}T^{2n}&=&\big(T^{\sharp_A}\big)^{m} \big(T^{\sharp_A}\big)^{m}T^{n}T^{n}
\\&=&\big(T^{\sharp_A}\big)^{m} \big(T^{\sharp_A}\big)^{m}TT^{n}T^{n-1}\\&=&
\big(T^{\sharp_A}\big)^{m} T^n\big(T^{\sharp_A}\big)^{m}T^{n}=\bigg( \big(T^{\sharp_A}\big)^{m}T^{n}  \bigg)^2.
\end{eqnarray*}
\end{proof}

\end{document}